\newcommand\ds\displaystyle
\newtheorem{theorem}{Theorem}
\newtheorem{lemma}[theorem]{Lemma}
\theoremstyle{definition}
\theoremstyle{remark}
\newtheorem{remark}[theorem]{Remark}
\def\R{\mathbb{R}}
\definecolor{verde}{RGB}{20,150,100}
\newcommand{\Om}{\Omega}
\newcommand{\om}{\omega}
\newcommand{\ra}{\rightarrow}
\def \e{\varepsilon}
\newcommand{\vps}{\varepsilon}
\newcommand{\Hm}{{\mathcal H}^{N-1}}
\newcommand{\sm}{\setminus}
\newcommand{\sq}{\subseteq}
\def\G{\Gamma}
\newcommand \Rn{\mathbb{R}^N}
\newcommand \Hs{\mathscr{H}^{N-1}}
\newcommand \Per{\text{Per}}
\newcommand \eps{\epsilon}
\newcommand \A{\mathcal{A}}
\begin{document}

\title[]{Boundary behavior of Robin problems in non-smooth domains}

\author[D. Bucur]
{Dorin Bucur}
\address[Dorin Bucur]{ Univ. Savoie Mont Blanc, CNRS, LAMA \\
73000 Chamb\'ery, France
}
\email[D. Bucur]{dorin.bucur@univ-savoie.fr}

\author[A. Giacomini]
{Alessandro Giacomini}
\address[Alessandro Giacomini]{DICATAM, Sezione di Matematica, Universit\`a degli Studi di Brescia, Via Branze 43, 25133 Brescia, Italy}
\email[A. Giacomini]{alessandro.giacomini@unibs.it}
\author[M. Nahon]
{Mickael Nahon}
\address[Mickael Nahon]{ Univ. Savoie Mont Blanc, CNRS, LAMA \\
73000 Chamb\'ery, France
}
\email[M. Nahon]{mickael.nahon@univ-smb.fr}

\thanks{A.G. is also member of the Gruppo Nazionale per L'Analisi Matematica, la Probabilit\`a e loro Applicazioni (GNAMPA) of the Istituto Nazionale di Alta Matematica (INdAM)}

\subjclass[2010]{}

\date{\today}
\maketitle

\begin{abstract}   
We analyze strict positivity at the  boundary for nonnegative solutions of Robin problems in general (non-smooth) domains, e.g. open sets with rectifiable topological boundaries having finite Hausdorff measure. This question was raised  by Bass, Burdzy and Chen in 2008 for harmonic functions, in a probabilistic context. We give geometric conditions such that the  solutions of  Robin problems associated to general elliptic operators of $p$-Laplacian type, with a positive right hand side, are globally or locally bounded away from zero at the boundary. Our method, of variational  type,  relies on the analysis of an isoperimetric profile of the set and provides quantitative estimates as well.
\end{abstract}

\section{Introduction}
The purpose of this paper is to study the boundary behavior of the solution of a Robin  problem for the $p$-Laplace operator associated to some non-negative right hand side in a non-smooth domain. For instance, for the Laplace operator, if both the domain and the solution were smooth, a consequence of the boundary Hopf principle is that the solution is strictly positive on the boundary (and of course in the interior of the domain). In this paper, we are interested in this question in the context of non smooth boundaries where the strong form of the boundary condition may not hold.

To introduce the problem, let $\Om\sq \R^N$ be an open, connected, bounded set with a boundary which, for the moment and for expository reasons, is assumed to be  of class $C^2$ (this regularity assumption will be removed later). Given a parameter $\beta >0$ and  a smooth function  $f:\Om\ra \R$, we consider the problem

\begin{equation}
\label{pde}
\begin{cases}
-\Delta u=f&\hbox{in }\Om\\
\ds \frac{\partial u}{\partial \nu}+\beta u=0&\hbox{on }\partial\Om,
\end{cases}
\end{equation}
$\nu$ being the outer normal at the boundary. Under the hypothesis $f \ge 0$ in $\Om$ one gets from the maximum principle that $u \ge 0$ in $\Om$ and, if $f \not \equiv 0$, $u$ is strictly positive inside $\Om$. The minimum of $u$ has to be searched on  the boundary of $\Om $ where, as a consequence of the Hopf lemma,  the normal derivative has to be strictly negative. Consequently, from the strong form of the  Robin boundary condition  $\frac{\partial u}{\partial n}+\beta u=0$ one gets that $u$ has to be strictly positive at its minimum. Finally,  there exists some $\delta >0$ such that
\begin{equation}\label{br01}
\forall x \in \Om, \;\; u(x) \ge \delta >0.
\end{equation}

If $\Om$ is not of class $C^2$ (for instance it is Lipschitz, or less regular but  smooth  enough to give sense to a weak form of the problem), then the Hopf lemma can not be anymore used to arrive to the same conclusion. The normal direction at the boundary might not be properly defined at some specific point of the boundary, and the strong pointwise form of the Robin boundary conditions may not apply. Nevertheless, lower bounds as \eqref{br01} may hold provided that, intuitively, there is no "too high" concentration of the boundary around one point.

Looking for strict positive estimates similar to \eqref{br01} in non smooth domains,  the first result is due to Bass, Burdzy and Chen \cite{BBC08}, by probabilistic methods in the context of harmonic functions. They identify a class of nonsmooth domains, including the Lipschitz ones, for which \eqref{br01}  holds for non-negative harmonic functions solving a Robin problem (see also \cite{BL14,CK16} for a similar question in the context of free discontinuity problems). The result of  \cite{BBC08} relies on a probabilistic method and applies to the Laplace operator. The key argument strongly uses the linearity of the equation,  involving the study of Green functions and the use of a uniform boundary Harnack inequality. In particular, the geometric conditions given in  \cite{BBC08}   require that   the domain should be written as union of sets with Lipschitz boundaries sharing the same  bound on the Lipschitz norm, up to a set of  $\Hm-$ Hausdorff measure equal to zero. This covers the case of  Lipschitz domains, of some domains with cusps and of some fractal domains.

 Two more  recent results by Gesztesy, Mitrea and Nichols \cite{gmn14} on the one hand, and by Arendt and ter Elst \cite{AtE17} on the other, show that a first non-negative eigenfunction of the Robin Laplacian in a Lipschitz set $\Om$  satisfies the bound from below \eqref{br01}, as soon as it is is not identically equal to $0$. Their arguments are based on the analysis of related semigroups acting on $C(\partial \Om)$ and are a consequence of a regularity property of the eigenfunctions of the Robin Laplacian in Lipschitz domains,  in particular their continuity up to the boundary.

The purpose of this paper is to analyze  the boundary strict positivity inequality \eqref{br01} in a more general context of non-smooth domains and of nonlinear PDEs of $p$-Laplacian type (even if they are not of energy type). Our technique is based on variational arguments, it allows to handle global and local results and to give quantitative estimates of the lowest value in terms of some average sum of the solution. The key ingredient is the behaviour of a kind of global (or local) isoperimetric profile of the set, which depends on $p$.

We deal with  bounded, open, connected sets with a rectifiable topological boundary which have finite $(N-1)$-Hausdorff measure.  In this case, traces of $W^{1,p}$-Sobolev functions are $\Hm$-pointwise well defined  on $\partial \Om$ and the Robin problem is well posed in a weak form for operators of $p$-Laplacian type.
The geometric properties which play a crucial role in the validity of \eqref{br01} can, in some situations, be related to the local control of the  $L^1$ norm of the trace of an BV function on $\partial \Om$ and to local reinforced isoperimetric inequalites  via the summability of an isoperimetric profile function. 
 Of course, a class of domains which satisfy naturally these geometric properties includes all Lipschitz sets. Nevertheless,  Lipschitz regularity is not, in general, required for the property to hold. 
 Our analysis  provides  a quantitative estimate for the constant $\delta$ in \eqref{br01} in terms of the geometry and of the mass of the solution $u$ on low sublevels.

For simplicity, we focus on the $p$-Laplacian equation which obeys an energetic variational principle. It turns out that the energetic principle is useful for the comprehension of the existence of a solution in a nonsmooth domain, but it is not a key tool for the boundary behaviour. We  discuss in the last section how the results for the $p$-Laplacian extend (under the assumption that a weak solution exists) to  more general monotone elliptic operators in divergence form, which are not necessarily related to energy minimization.

\section{Global boundary behavior}
   Let    $\Om \sq \R^N$ be a bounded, open, connected set such that $\partial \Om$ is rectifiable and $\Hm(\partial \Om) < +\infty$.  Let $\beta >0$ be a constant,  $p \in (1, +\infty)$, $p'=\frac{p}{p-1}$. Let $f \in L^{p'} (\Om)$, $f \ge 0$ not identically equal to $0$.

Let $v\in W^{1,p}(\Om)$. Then for $\Hm$-a.e. point $x \in \partial \Om$ the function $v$ extended by $0$ outside $\Om$ has  upper and lower approximate limits, denoted $v_+$ and $v_-$.  Moreover, 
$$v \to \int_{\partial\Om} (|v_+|^p+|v_-|^p) d \Hm$$
 is lower semicontinuous for the weak topology in $W^{1,p}(\Om)$. There exists a  function $u$ which minimises in $W^{1,p}(\Om)$ the following energy
\begin{equation}\label{bgnr03}
E(v)=\frac{1}{p}\left(\int_{\Om}|\nabla v|^pdx+\beta \int_{\partial\Om} (|v_+|^p+|v_-|^p) d \Hm \right)-\int_{\Om}fvdx,
\end{equation}
 and the minimizer is unique, from the strict convexity of $E$. For the existence part, the key results are the Poincar\'e inequality with trace term which occurs in this setting together with the compactness result in $L^p(\Om)$ which holds for a sequence of functions with bounded energy. For all these results we refer to \cite{AFP} and more specifically to \cite{BG16}. 
Alternatively, relying on the space of special functions with bounded variation $SBV^{1/p}(\Rn, \R^+) $ (see \cite{AFP}), this procedure to define weak solutions can be seen by the minimization of $E$ over   $SBV (\Rn, \R^+) $-functions with support in $\Om$ and jump in $\partial\Om$ (see  \cite{BG16}). As the existence part is not relevant for our purpose in this paper, we shall not detail it. 
 
 If $\Om$ is Lipschitz, then 
 $u$ is the (weak) variational solution of

\[\begin{cases}
-\Delta_pu=f &\mbox{ in } \Omega,\\ |\nabla u|^{p-2}\frac{\partial u}{\partial \nu}+\beta |u|^{p-2}u=0 &\mbox{ on }  \partial\Omega,
\end{cases}\]
meaning that $u \in W^{1,p}(\Om)$ and for every $v \in W^{1,p}(\Om)$
$$
\int_\Om |\nabla u|^{p-2} \nabla u \nabla v dx + \beta \int_{\partial \Om} |u|^{p-2}u v d \Hm= \int_\Om fv dx.$$
If $\Om$ has rectifiable boundary with finite Hausdorff measure, the weak solution $u \in W^{1,p}(\Om)$ satisfies for every $v \in W^{1,p}(\Om)$
\[ \int_\Om |\nabla u|^{p-2} \nabla u \nabla v dx + \beta \int_{\partial \Om} \left(|u^+|^{p-2}u^+v^+ +|u^-|^{p-2}u^-v^- \right)d \Hm=\int_\Om fv dx,\]
where $u^+(x),u^-(x)$ (and $v^+(x),v^-(x)$) denote the traces of $u$ at $x\in\partial\Om$, ordered by the choice of a normal vector normal to $\partial\Om$ at $x$ which is well-defined $\Hm$-almost everywhere.

For a measurable set with finite perimeter $\om\sq \Om$, we denote  $\partial ^* \om$ its reduced boundary and
$$
P^e(\om ):= \int_{\partial \Om}\left( 1_\om^++1_\om^-\right) d\Hm\qquad\text{and}\qquad  P^i(\om):= P(\om,\Om),
$$
the exterior  and the interior perimeter of $\om$, respectively. Note that the exterior perimeter counts the both sides of a crack of $\partial \Om$ which crosses $\om$. In particular, $P^e (\Om)$ equals the sum between the Hausdorff measure of the topological boundary points of density $\frac{1}{2}$ in $\Om$ and twice the Hausdorff measure of topological boundary points of density $1$. For simplicity, we denote $\partial ^i \om= \partial ^* \om \cap \Om$ and $\partial ^e \Om= \partial ^* \om \cap \partial \Om$.

\smallbreak
We define the $p$-isoperimetric profile of $\Om$ as follows
$$I : \left(0, \frac 12 P^e (\Om)^\frac 1 p |\Om| ^\frac{1}{p'}\right) \ra \R\cup\{+\infty\},$$
\[I (m)=\inf \left\{P^i(\om): \ \om\subset\Om, |\om|\le \frac{|\Om|}{2}, m \le P^e (\om )^\frac{1}{p}|\om|^{\frac{1}{p'}}\right\}.\]
The value of $I  (m)$ is well defined for every $ 0<m\le  \frac 12 P^e (\Om)^\frac 1p |\Om| ^\frac{1}{p'}$ since the set above is not empty.  Moreover, the function $I$ is non decreasing. Note that the main interest is related to in the behavior of $I$ when $m$ is small.

\begin{theorem}\label{bgnr01}
Assume $\frac{1}{I }$ is summable. Then 
\[\inf_{x\in \Om} u(x) \geq\frac{T}{2}>0,\]
where $T>0$ denotes the largest number such that
\begin{equation}
\label{eq:adm1}
|\{u<T\}| \leq \frac {1}{ 2} |\Om| 
\end{equation}
and
\begin{equation}
\label{eq:adm2}
\beta^{\frac{1}{p}}\int_{0}^{P^e(\Om)^{\frac{1}{p}}|\{u<T\}| ^ \frac {1}{{p'}} }\frac{dm}{I  (m)}\leq \frac{1}{2}.
\end{equation}
\end{theorem}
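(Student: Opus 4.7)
The plan is to combine a test-function energy estimate with the coarea formula and the isoperimetric profile, then integrate a resulting differential inequality to produce the integral bound $\int_0^{\tilde M(T)}dm/I(m)$.

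First I would insert $\varphi=(t-u)_+$ as a test function in the weak Euler--Lagrange identity. Writing $\omega_t=\{u<t\}$, $h(t):=\int_{\omega_t}|\nabla u|^p\,dx$, and $m(t):=|\omega_t|$, the positivity $f\geq 0$ and the elementary estimate $s^{p-1}(t-s)_+\leq t^p$ on $\{0\leq s<t\}$ give, after identifying $1_{\omega_t}^+ + 1_{\omega_t}^- = \chi_{u^+<t}+\chi_{u^-<t}$ on $\partial\Omega$, the bound $h(t) \leq \beta t^p P^e(\omega_t)$. A refinement using the family of test functions $\varphi=\Phi(T)-\Phi(u\wedge T)$ with an arbitrary non-negative primitive $\Phi(u)=\int_0^u\rho(s)\,ds$ ($\rho\geq 0$), combined with Fubini to reorder the boundary term as $\int_0^T\rho(s)\,s^{p-1}P^e(\omega_s)\,ds$, upgrades this to the pointwise derivative inequality $h'(t)\leq\beta t^{p-1}P^e(\omega_t)$ for a.e.~$t$, since the resulting inequality $\int_0^T\rho(t)(h'(t)-\beta t^{p-1}P^e(\omega_t))\,dt\leq 0$ holds for every $\rho\geq 0$.

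Next, the coarea formula yields $h'(t)=\int_{\{u=t\}}|\nabla u|^{p-1}\,d\mathcal H^{N-1}$ and $m'(t)=\int_{\{u=t\}}|\nabla u|^{-1}\,d\mathcal H^{N-1}$, so H\"older gives $P^i(\omega_t)\leq h'(t)^{1/p}m'(t)^{1/p'}$ almost everywhere. Since $m(t)\leq|\Omega|/2$ for $t\in(0,T)$ by \eqref{eq:adm1}, the definition of $I$ provides $P^i(\omega_t)\geq I(M(t))$ with $M(t):=P^e(\omega_t)^{1/p}m(t)^{1/p'}$. Combining these three ingredients I would obtain, a.e.~on $(0,T)$,
\[ I(M(t))^{p'}\leq \beta^{1/(p-1)}\,t\,P^e(\omega_t)^{1/(p-1)}\,m'(t), \]
whence, using $P^e(\omega_t)\leq P^e(\Omega)$ and identifying $\tilde M(T)^{p'}=P^e(\Omega)^{1/(p-1)}m(T)$, integrating in $t$ gives both $\int_0^T I(M(t))^{p'}dt\leq\beta^{1/(p-1)}\,T\,\tilde M(T)^{p'}$ and (by H\"older in $t$) the companion estimate $\int_0^T I(M(t))\,dt\leq\beta^{1/p}\,T\,\tilde M(T)$.

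To conclude, I would argue by contradiction: assume $\inf u<T/2$, so $m(T/2)>0$ and $M(T/2)>0$. Since $I\circ M$ is non-decreasing, $(T/2)I(M(T/2))\leq\int_{T/2}^T I(M(t))\,dt\leq \beta^{1/p}T\,\tilde M(T)$, which yields $I(M(T/2))\leq 2\beta^{1/p}\tilde M(T)$. Applying this dyadically at levels $T/2^k$ produces a sequence $a_k:=M(T/2^k)$ with $I(a_{k+1})\leq 2\beta^{1/p}a_k$; using monotonicity of $I$ to write $\int_0^{M(T)}dm/I(m)=\sum_k\int_{a_{k+1}}^{a_k}dm/I(m)\geq\sum_k(a_k-a_{k+1})/I(a_k)$ and summing telescopically with the help of the summability of $1/I$ should give $\beta^{1/p}\int_0^{\tilde M(T)}dm/I(m)>1/2$, contradicting \eqref{eq:adm2}.

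The hardest step is unquestionably this last passage from the $t$-integrated bound to the integral $\int dm/I(m)$. The delicate point is that the isoperimetric inequality only controls $I$ at the ``intrinsic'' value $M(t)$ while the statement is formulated with the ``global'' value $\tilde M(T)\geq M(t)$; bridging this gap, and extracting the sharp numerical factor $1/2$ rather than some weaker dyadic constant, is where I expect most of the technical effort to go, likely through careful use of the monotonicity of $I$ and the Hardy-type summability of $1/I$ near the origin.
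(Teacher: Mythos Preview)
Your ingredients are the right ones (energy comparison with $\max(u,t)$, coarea, isoperimetric profile), but you have made the argument far harder than it needs to be by choosing the wrong monotone quantity, and your final dyadic step does not close.

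The paper's argument is a short direct computation based on one change: instead of $h(t)=\int_{\omega_t}|\nabla u|^p$, set
\[
g(t):=\int_{\omega_t}|\nabla u|\,dx.
\]
Then the coarea formula gives \emph{exactly} $g'(t)=P^i(\omega_t)$ with no H\"older loss, while the energy comparison $\int_{\omega_t}|\nabla u|^p\le \beta t^p P^e(\omega_t)$ together with H\"older yields
\[
\frac{g(t)}{\beta^{1/p}t}\le P^e(\omega_t)^{1/p}|\omega_t|^{1/p'}.
\]
Thus $\omega_t$ is admissible in the definition of $I$ at level $m=g(t)/(\beta^{1/p}t)$, giving the clean differential inequality
\[
I\!\left(\frac{g(t)}{\beta^{1/p}t}\right)\le g'(t)\qquad\text{for }t_0:=\inf u\le t\le T.
\]
Dividing by the left-hand side and integrating in $t$ from $t_0$ to $T$, then changing variable $m=g(\tau)/(\beta^{1/p}T)$ (using monotonicity of $I$ to replace $\tau$ by $T$ in the denominator), one gets directly
\[
1-\frac{t_0}{T}\le \beta^{1/p}\int_0^{g(T)/(\beta^{1/p}T)}\frac{dm}{I(m)}\le \beta^{1/p}\int_0^{P^e(\Omega)^{1/p}|\omega_T|^{1/p'}}\frac{dm}{I(m)}\le \frac12,
\]
hence $t_0\ge T/2$ with the sharp constant and no dyadic decomposition.

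Your route, by contrast, produces only $I(M(t))^{p'}\le \beta^{1/(p-1)}t\,P^e(\omega_t)^{1/(p-1)}m'(t)$, which couples $I$, $M(t)$ and $m'(t)$ in a way that does not obviously integrate back to $\int dm/I(m)$. In your dyadic step the claimed recursion $I(a_{k+1})\le 2\beta^{1/p}a_k$ does not follow from what you proved: what the estimate at scale $T/2^k$ actually yields is $I(a_{k+1})\le 2\beta^{1/p}\tilde a_k$ with $\tilde a_k=P^e(\Omega)^{1/p}m(T/2^k)^{1/p'}\ge a_k$, and the subsequent telescoping $\sum_k(a_k-a_{k+1})/I(a_k)$ then mixes the two sequences in a way that does not deliver the constant $1/2$ (nor, as far as I can see, any fixed constant without further hypotheses on the decay of $a_k$). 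You yourself flag this as the hard step; in fact it is bypassed entirely once you track $g(t)$ rather than $h(t)$.
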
 
Before the proof, let us make some remarks. The summability condition on the isoperimetric profile seems abstract and difficult to handle. However, as we shall point out in some examples, there are situations where efficient estimates of   $I $ can be obtained under controllable geometric assumptions, for instance in the case of cusps. Note as well that  $T$ is indeed strictly positive, otherwise the solution $u$ would vanish on a set of positive measure, which is impossible for a nonvanishing, nonnegative, $p$-superharmonic function (\cite[Theorem 7.12]{HKM93}). 

Let also point out that a similar result could be obtained for a nonconstant $\beta \in L^1(\partial \Omega, \R^+)$.  However, in this case, a fine study would require the analysis of an isoperimetric profile involving the function $\beta$, which is not easy in practical situations. In the particular case in which the function $\beta$ is bounded, our result applies to the constant $\|\beta\|_\infty$. In the next section, where we perform a local analysis,  considering such functions $\beta$ which are locally bounded may be of more interest. 
\begin{proof}
We note first that $u \ge 0$.
For every $t >0$, let $\om_t=\{u<t\}$ and  $g(t)=\int_{\om_t}|\nabla u|$. From the minimizing property of $u$ compared to $\max(u,t)$, we have the estimate
\begin{equation}\label{eq3.1}
\int_{\om_t}|\nabla u|^p \leq \beta t^pP^e(\om_t).
\end{equation}
Consequently,   H\"older inequality gives
\begin{equation}\label{eq3}
g(t)\leq t(\beta P^e(\om_t ))^\frac{1}{p}|\om_t|^{\frac{1}{p'}}.
\end{equation}

We denote $t_0=\sup\{s\geq 0:g(s)=0\}=\inf(u)$, our goal being to estimate $t_0$ from below. 
  If $t_0\le t\le T$, thanks to \eqref{eq:adm1} and to the monotonicity of $I$ we get
 \[I 
 \left(\frac{g(t)}{\beta ^\frac{1}{p} t}\right)\leq I  \left(P^e(\om_t )^\frac{1}{p}|\om_t|^{\frac{1}{p'}}\right)\leq P^i(\om_t )=g'(t).
 \]
Summing from $t_0$ to  $t$, we obtain (using again the monotonicity of $I$)
\[
 \int_{t_0}^{t}\frac{g'(\tau)d\tau}{I(g(\tau)/\beta ^\frac{1}{p}t)}\ge \int_{t_0}^{t}\frac{g'(\tau)d\tau}{I(g(\tau)/\beta ^\frac{1}{p}\tau)}\geq t-t_0,
\]
and the change of variable $m=g(\tau)/\beta ^\frac{1}{p}t$, the inequality becomes
\[ \beta ^\frac{1}{p}t\int_{0}^{g(t)/\beta ^\frac{1}{p}t}\frac{dm}{I( m)}\geq t-t_0.\]
With the initial estimate \eqref{eq3} and  equation \eqref{eq:adm2} for $T$, we finally obtain
\[1-\frac{t_0}{T}\leq \int_{0}^{P^e(\Om )^{\frac{1}{p}}|\om_T|^{\frac{1}{p'}}}\frac{dm}{I(m)}\leq \frac{1}{2},\]
so that $t_0\geq \frac{T}{2}$.
\end{proof}

\subsection{Examples}
To be more precise,  we discuss now some global geometric assumptions  on  $\Om$ which can be verified and which lead to strict positivity. We denote by $\partial \Om$ the topological boundary and by $\partial ^* \Om$ the measure theoretical boundary of $\Om$.

\medskip
\noindent {\bf Geometric interpretation.}  Let $\Om \sq \R^N$ be a bounded, open, connected set such that $\partial \Om$ is rectifiable and $\Hm(\partial \Om) < +\infty$.  Below, the constant $G>0$ may change from line to line. Assume that that there exist constants $G>0$ and $1\ge a,b\ge 0$   such that for all $\om \subset \Om$ with finite perimeter satisfying $|\om|\le \frac{|\Om|}{2}\wedge 1$ we have
\begin{equation}\label{br02.a}
P^e(\om ) \le G \frac{P^i(\om)}{|\om|^a}.
\end{equation}
\begin{equation}\label{br02.b}
|\om|^\frac{N-1}{N}\le G \left(P^i(\om)+P^e(\om)\right)\left(\frac{P^i(\om)}{P^e(\om)}\right)^b.\end{equation}

The first inequality gives a control  for the ratio between the boundary and inner perimeters by a constant which can blow up when the measure of the domain is small, while the second inequality can be interpreted as an improved isoperimetric inequality for domains with small measure with a large isoperimetric quotient. Then, Theorem \ref{bgnr01} applies for certain values of $a,b$, provided \eqref{br02.a} and \eqref{br02.b} are satisfied. We point out the following examples.

\begin{itemize}

\item In general, if 
\begin{equation}\label{bgnr02}
M:=\frac 1p+\frac {\frac {1}{p'} -\frac ap}{\frac{N-1}{N} +a (1-b)}>1,
\end{equation}
then the  integrability hypothesis of Theorem \ref{bgnr01} is satisfied and  strict positivity occurs. 
Indeed, assume that \eqref{bgnr02} occurs. We  study the integrability of the $p$-isoperimetric profile of $\Om$. 
Take $\om\sq \Om$ such that 
$$ m \le   P^e(\om)^\frac{1}{p}|\om|^{\frac{1}{p'}}.$$
We deduce from \eqref{br02.a} and \eqref{br02.b} a lower bound on $P^i(\om)$, which implies a lower bound on $I(m)$ by taking the infinimum among all admissible $\om$.\newline

If $P^e(\om)\leq P^i(\om)$, then the usual isoperimetric inequality applies and so there is a constant $c_N>0$ such that $|\om|^\frac{N-1}{N}\leq c_N \mathrm{Per}(\om)\leq 2c_NP^i(\om)$, so
\[P^i(\om)=P^i(\om)^{\frac{Np-N}{Np-1}}P^{i}(\om)^{\frac{N-1}{Np-1}}\geq A_N|\om|^\frac{(N-1)(p-1)}{Np-1}P^{i}(\om)^{\frac{N-1}{Np-1}}\geq A_N m^\frac{Np-p}{Np-1}\]
for some $A_N>0$. On the contrary if $P^i(\om)\leq P^e(\om)$ then we have
\begin{align*}
P^i(\om)^b&\geq c|\om|^\frac{N-1}{N}P^e(\om)^{b-1}\\
P^i(\om)&\geq c|\om|^aP^e(\om)\\
\end{align*}
for some $c>0$, so combining those as previously we obtain
\[P^i(\om)^M=P^i(\om)^{b\frac{1-\frac{1+a}{p}}{ \frac{N-1}{N}+a(1-b)}}P^i(\om)^{\frac{1}{p}+\frac{\left(1-\frac{1+a}{p}\right)(1-b)}{ \frac{N-1}{N}+a(1-b)}}\geq c|\om|^{ \frac{1}{p'} }P^e(\om)^\frac{1}{p} \ge m\]
In conclusion
$$I (m) \ge c\left(m^\frac 1M\wedge m^\frac{Np-p}{Np-1}\right).$$
Hence $\int_0^{m_0}\frac{1}{I (m)}dm <+\infty.$
\vskip10pt
\item  For $N=2$, the case $b= \frac 12$, $a= \frac{\alpha -1}{\alpha+1}$, $\alpha \in (1,2)$ covers the cuspidal domains in $\R^2$ $\{(x,y) : 1>x>0, |y|< x^\alpha\}$. The proof it is not direct, we refer to Section \ref{s.cusps} for a general approach of cusps. For the Laplace operator, the case $a=\frac 13$ and $b=\frac 12$ is critical as  $M$ defined  above  equals to $1$, and the positivity property does not hold (see \cite{BBC08}).
\vskip10pt
\item For a Lipschitz set $\Om$,  inequalities \eqref{br02.a}-\eqref{br02.b} are satisfied with $ a=b=0$.  
 Indeed \eqref{br02.b} reduces to the isoperimetric inequality. Concerning \eqref{br02.a}, the trace theorem in $BV$ applied to $1_\om$ together with the relative isoperimetric inequality in a Lispchitz domain (recall $|\om|\le \frac{1}{2}|\Om|$) yield
$$
P^e(\om)\le C_\Om \left[ P^i(\om)+|\om|\right]\le C_\Om \left[ P^i(\om)+|\Om|^{\frac{1}{N}}|\om|^{\frac{N-1}{N}}\right]\le \hat C_\Om P^i(\om),
$$
for some constants $C_\Om,\hat C_\Om>0$.

\end{itemize}

\begin{remark} \rm
More generally, we notice that inequality \eqref{br02.a} entails an inner density estimate for the set $\Om$ in the case $a=0$. Indeed, taking a boundary point $x_0\in \partial \Om$, and $A= B(x_0, t)$, from the isoperimetric inequality and  \eqref{br02.a} we get by the co-area formula that for a.e. $t\in (0, t_0)$ that
$$m(t)^\frac{N-1}{N} \le C_N (1+G) m'(t).$$
Above, $t_0$ is the radius of the ball of the same measure as $\frac{|\Om|}{2}$ and $m(t)= |\Om \cap B(x_0, t)|$. Then, by summing from $0$ to $r$, with $r\le t_0$, one gets
$$|\Om \cap B(x_0, r)| \ge \frac {r^N}{\big (NC_N (1+G)\big )^N},$$
giving the uniform inner density estimate. More generally $a\in [0,1/N)$ this implies an estimate
$$|\Om \cap B(x_0, r)| \ge cr^{\frac{N}{1-N\alpha}}.$$

\medskip

If $a=0$, inequality \eqref{br02.a} is in fact related to the BV-trace theorem of Anzellotti and Giaquinta, for which we refer the reader to \cite{ag78}. Indeed, if the topological boundary of $\Om$ coincides $\Hm$-a.e. with its reduced boundary, the existence of a constant $C_1$ above is related to the following inequality
$$C\int_{\partial \Om} |u| dx \le \int_\Om |Du| + \int_\Om |u| dx,$$
by $|D u|$ denoting the total variation measure associated to the BV function $u$. If $\partial \Om$ differs from $\partial ^* \Om$ by a set of positive $\Hm$-measure (for instance $\Om$ has cracks), then the result of Anzellotti and Giaquinta does not apply directly. It is not our goal here to analyze the existence of continuous traces. 
\end{remark}

\begin{remark}
{\rm
Let us sketch briefly a direct argument to prove that $\inf_\Om u>0$ in the Lipschitz case, which employes the standard relative isoperimetric inequality and the trace operator in $BV$ (see the considerations above). Using the same notation as in the proof of Theorem \ref{bgnr01}, let
$$
\omega_t:=\{x\in \Om\,:\, u(x)<t\}
$$
and let us assume by contradiction that $\omega_t\not=\emptyset$ for every $t>0$. Below $C$ indicates a constant depending only on $\Om$ which can very from line to line. The Lipschitz regularity of the boundary entails for $t$ small
$$
P^e(\om_t)\le CP^i(\om_t) \qquad \text{and}\qquad |\om_t|\le C P^i(\om_t)^{\frac{N}{N-1}},
$$
so that the comparison between the functions $u$ and $u\wedge t$ leads to
\begin{equation}
\label{eq:r1}
\int_{\om_t}|\nabla u|\,dx\le \left( \int_{\om_t}|\nabla u|^p\,dx \right)^{\frac{1}{p}}|\om_t|^{\frac{1}{p'}}\le [\beta t^p P^e(\om_t)]^{\frac{1}{p}}|\om_t|^{\frac{1}{p'}}
\le C tP^i(\om_t)^{1+\e},
\end{equation}
where $\e>0$. Setting $h(t):=P^i(\om_t)$, the coarea formula yields
$$
\int_{\om_t}|\nabla u|\,dx=\int_{0}^t h(\tau)\,d\tau=:H(t),
$$
so that
$$
\frac{1}{t^{1-\eta}}\le C\frac{H'(t)}{H(t)^{1-\eta}},
$$
where $0<\eta<1$.
Summing from $0$ to $t$ we get
\begin{equation}
\label{eq:r12}
t\le C \int_{0}^t h(\tau)\,d\tau.
\end{equation}
Coming back to \eqref{eq:r1}, we may write
$$
\frac{C}{t^p |\omega_t|^{\frac{p}{p'}}}\le \frac{h(t)}{\left( \int_{0}^t h(\tau)\,d\tau\right)^p},
$$
so that, using the monotony of $|\omega_t|$ and integrating on $[t_1,t_2]$
$$
\frac{C(t_2-t_1)}{t_2^p |\omega_{t_2}|^{\frac{p}{p'}}}\le \frac{1}{p-1}\frac{1}{\left( \int_{0}^{t_1}h(\tau)\,d\tau\right)^{p-1}}.
$$
Setting $t_2=2t_1$ we obtain
$$
\int_0^{t_1}h(\tau)\,d\tau\le C t_1|\omega_{2t_1}|.
$$
In view of \eqref{eq:r12} we get
$$
t_1\le C\int_0^{t_1}h(\tau)\,d\tau\le Ct_1|\omega_{t_1}|
$$
which is false if $t_1$ is small enough, so that the result follows.
\par
A similar reasoning can be used starting from the more general inequalities \eqref{br02.a} and \eqref{br02.b}, but the approach through the isoperimetric profile $I(m)$ encompasses all the situations in a very elegant way.
}
\end{remark}

\section{Local boundary behaviour}

In this section, we give a localized positivity result. In particular, this  applies to the case in which $\beta$ is a function, locally bounded from above. In the computations below we assume without loosing generality that $\beta\le1$.

 Under the previous hypotheses on $\Om$, let $\Gamma$ be a relatively open subset $\Gamma\subset \partial\Om$. For a measurable set $\om \sq \Om$ with finite perimeter, we denote the exterior/interior perimeters relative to $\Gamma$ 
\begin{align*}
P_\G^e(\om)&
=\int_{\Gamma} (1_\om^++1_\om^-) d\Hm,\ P_\G^i(\om)
= \Hm (\Om \cap J_{1_\om} )+ \int_{\partial \Om \sm \Gamma} (1_\om^++1_\om^-) d\Hm,
\end{align*}
and the associated isoperimetric profile  $I_\G: (0, \frac 12 P^e_{ \Gamma}(\Om )^\frac 1p |\Om| ^\frac{1}{p'}]\ra \R \cup\{+\infty\}$
\begin{align*}
I_\G(m)&=\inf\left\{P_\G^i(\om),\om\subset\Om\text{ s.t. }|\om|\leq \frac{1}{2}|\Om|,\ P_\G^e(\om)^\frac{1}{p} |\om|^{ \frac{1}{p'} }\ge m\right\}.\\
\end{align*}

We start with a general result, which applies, in particular, to the  minimizers  of \eqref{bgnr03}, as soon as $f \ge 0$, $f \not=0$ and $\beta \le 1$. 

\begin{theorem}\label{bgnr05}
Let $u\in W^{1,p} (\Om)$, $u \not= 0$,  be a non negative function such that for any $v\in W^{1,p}(\Om)$ with $v\geq u$, $\text{dist}(\{v>u\},\partial\Om\setminus\Gamma)>0$,
\begin{equation}\label{bgnr04}
 \int_{\Om}|\nabla u|^p\, dx\leq \int_{\Om}|\nabla v|^p\, dx+  \int_{\Gamma} [v_+^p+v_-^p]\,d\Hm.
 \end{equation}
Suppose also that $1/I_{ \Gamma }$ is summable in a neigbourhood of the origin. Then for any compact set $K$ in $\Gamma$,
\[\underset{z(\in \Om)\to K}{\mathrm{ess{-}liminf}}u(z)>0,\]
meaning that there exist $r,\eps>0$ such that \[\left|\{u<\eps\}\cap \{z\in\Om:\text{dist}(z,K)<r\}\right|=0.\]
\end{theorem}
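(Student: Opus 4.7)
The plan is to localize the argument of Theorem~\ref{bgnr01} using a cutoff supported near $K$. By compactness of $K$ and relative openness of $\Gamma$ in $\partial\Om$, I choose open sets $V\Subset U\subset\R^N$ with $K\subset V$ and $\overline{U}\cap\partial\Om\subset\Gamma$, together with a Lipschitz cutoff $\varphi:\R^N\to[0,1]$ with $\varphi\equiv 1$ on $V$ and $\varphi\equiv 0$ outside $U$. For each $t>0$ the competitor will be $v_t:=u\vee(t\varphi)$. Then $v_t\geq u$ and $\{v_t>u\}\subset\{\varphi>0\}\subset U$, so $\mathrm{dist}(\{v_t>u\},\partial\Om\setminus\Gamma)>0$ and $v_t$ is admissible in \eqref{bgnr04}.

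Setting $\omega_t:=\{u<t\varphi\}$, one has $\nabla v_t=\nabla u$ on $\omega_t^c$ and $\nabla v_t=t\nabla\varphi$ on $\omega_t$, while on $\Gamma$ the traces satisfy $v_{t,\pm}=\max(u_\pm,t\varphi)$. Since $v_{t,\pm}^p-u_\pm^p$ is supported where $u_\pm<t\varphi$ and there bounded by $(t\varphi)^p$, the relation between the traces and $P^e_\Gamma$ yields
\[
\int_\Gamma(v_{t,+}^p+v_{t,-}^p-u_+^p-u_-^p)\,d\Hm\leq t^p P^e_\Gamma(\omega_t).
\]
Plugging $v_t$ into the minimality property underlying \eqref{bgnr04}, cancelling the Dirichlet contribution on $\omega_t^c$ and the common trace term, gives the local analogue of the key bound from Theorem~\ref{bgnr01}:
\[
\int_{\omega_t}|\nabla u|^p\,dx\leq t^p\bigl(C_\varphi+P^e_\Gamma(\omega_t)\bigr),\qquad C_\varphi:=\int_\Om|\nabla\varphi|^p\,dx.
\]
With $g(t):=\int_{\omega_t}|\nabla u|\,dx$, H\"older yields
\[
g(t)\leq t\,|\omega_t|^{1/p'}\bigl(C_\varphi+P^e_\Gamma(\omega_t)\bigr)^{1/p}.
\]

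From here I would mimic the isoperimetric argument of Theorem~\ref{bgnr01}. The coarea formula gives $g'(t)\geq P^i_\Gamma(\omega_t)$ a.e., and by definition of $I_\Gamma$,
\[
I_\Gamma\bigl(P^e_\Gamma(\omega_t)^{1/p}|\omega_t|^{1/p'}\bigr)\leq P^i_\Gamma(\omega_t)\leq g'(t).
\]
Setting $t_0:=\inf\{t>0:|\omega_t\cap V|>0\}$ and integrating from $t_0$ to $t$, then performing the change of variable $m=g(\tau)/\tau$ as in the proof of Theorem~\ref{bgnr01}, one reaches an inequality of the shape
\[
1-\frac{t_0}{t}\leq C\int_0^{P^e_\Gamma(\Om)^{1/p}|\omega_t|^{1/p'}}\frac{dm}{I_\Gamma(m)}+O\!\left(C_\varphi^{1/p}|\omega_t|^{1/p'}\right).
\]
If $t_0=0$, then by the strong maximum principle for nonvanishing $p$-superharmonic functions (\cite[Theorem 7.12]{HKM93}) one has $u>0$ almost everywhere, hence $|\omega_t|\to 0$ as $t\to 0^+$. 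Summability of $1/I_\Gamma$ near the origin then forces the right-hand side to zero, a contradiction. Consequently $t_0>0$, so $|\{u<t_0/2\}\cap V|=0$, which with $r:=\mathrm{dist}(K,\partial V)$ and $\varepsilon:=t_0/2$ is the claimed essential lower bound.

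The principal obstacle is the cutoff contribution $C_\varphi$, which has no analogue in the global argument: competing with $v_t$ injects the Dirichlet energy $t^p|\nabla\varphi|^p$ on the transition annulus $U\setminus V$, so the clean inequality $g(t)\leq t\,P^e_\Gamma(\omega_t)^{1/p}|\omega_t|^{1/p'}$ of Theorem~\ref{bgnr01} is no longer available. The fix is that $C_\varphi$ is fixed while $|\omega_t|^{1/p'}\to 0$ as $t\to 0$, making the extra term asymptotically lower order and absorbable. A secondary subtlety is that $\omega_t=\{u<t\varphi\}$ differs from a plain sublevel set on $U\setminus V$, so the coarea identification $g'(t)=P^i_\Gamma(\omega_t)$ must be carried out by restricting to $V$, on which $\varphi\equiv 1$ and $\omega_t\cap V=\{u<t\}\cap V$.
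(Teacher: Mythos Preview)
Your localization idea is natural, but the proof has a genuine gap at the coarea step, and the suggested fix of ``restricting to $V$'' does not close it.

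With $\omega_t=\{u<t\varphi\}$ and $g(t)=\int_{\omega_t}|\nabla u|$, the family $\omega_t$ is the sublevel family of $F:=u/\varphi$, not of $u$. The coarea formula yields $\frac{d}{dt}\int_{\omega_t}|\nabla F|=P^i_\Gamma(\omega_t)$, but you are integrating $|\nabla u|$, not $|\nabla F|$. A direct computation gives
\[
g'(t)=\int_{\partial^*\omega_t\cap\Om}\frac{\varphi\,|\nabla u|}{|\nabla u-t\nabla\varphi|}\,d\Hm,
\]
and on the annulus $U\setminus V$ (where $\varphi<1$ and $\nabla\varphi\neq 0$) there is no reason for the density $\varphi|\nabla u|/|\nabla u-t\nabla\varphi|$ to be $\geq 1$; in particular $g'(t)\geq P^i_\Gamma(\omega_t)$ can fail. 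Restricting to $V$ only gives $g_V'(t)=\Hm(\{u=t\}\cap V)$, which is \emph{at most} $P^i_\Gamma(\omega_t)$ (you lose the part of $\partial^*\omega_t$ sitting in $U\setminus V$), so the inequality goes the wrong way. And if you work with $\omega_t\cap V$ instead of $\omega_t$, then $P^i_\Gamma(\omega_t\cap V)$ picks up the extra fixed contribution $\Hm(\partial V\cap\omega_t)$, which $g_V'(t)$ again does not control. Either way the differential inequality you need does not follow. A secondary issue is that the ``change of variable $m=g(\tau)/\tau$'' is not a change of variable (both $g$ and $\tau$ move); in the global Theorem~\ref{bgnr01} the substitution is $m=g(\tau)/(\beta^{1/p}t)$ with the \emph{upper} endpoint $t$ fixed. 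Finally, invoking \cite[Theorem~7.12]{HKM93} to get $|\omega_t|\to 0$ is not justified here, since \eqref{bgnr04} alone does not make $u$ $p$-superharmonic.

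The paper sidesteps all of this by replacing the multiplicative cutoff $t\varphi$ with the \emph{additive} cone competitor $\max\bigl(u,\,t-\gamma|x-x_0|\bigr)$. Then $\omega_t=\{u+\gamma|x-x_0|<t\}$ is the sublevel of the single function $u+\gamma|x-x_0|$; setting $g(t)=\int_{\omega_t}|\nabla(u+\gamma|x-x_0|)|$ makes the coarea identity $g'(t)=P^i_\Gamma(\omega_t)$ exact. The price is an extra volume term $\gamma^p|\omega_t|$ in the energy estimate, which the paper absorbs not by calling it ``lower order'' but by introducing a modified profile $J_\Gamma$ (with constraint $P^e_\Gamma(\omega)^{1/p}|\omega|^{1/p'}+|\omega|\ge m$) and proving in a separate lemma that $1/J_\Gamma$ is summable iff $1/I_\Gamma$ is. The contradiction is then obtained by letting the free parameter $\gamma\to 0$, which forces $|\omega_{r\gamma}|\to 0$. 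Your scheme has no such free parameter, which is why you are forced into the unjustified ``absorbable lower order'' claim for $C_\varphi$.
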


We start with  a technical result.
\begin{lemma}
Let $I_\G$ be as defined above and let $J_{ \Gamma }$ be defined by 
$$J_\Gamma(m):=\inf\left\{P_\G^i(\om),\om\subset\Om\text{ s.t. }|\om|\leq \frac{1}{2}|\Om|,\ P_\G^e(\om)^\frac{1}{p}|\om|^{ \frac{1}{p'} }+|\om|\ge m\right\}.$$
Then $1/I_\G$ is summable if and only if $1/J$ is summable.
\end{lemma}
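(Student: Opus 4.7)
The plan is to observe that one direction is a pointwise comparison while the other requires the $\R^N$ isoperimetric inequality. Since $P_\G^e(\om)^{1/p}|\om|^{1/p'}\ge m$ implies $P_\G^e(\om)^{1/p}|\om|^{1/p'}+|\om|\ge m$, the admissible set for $J_\G(m)$ contains the one for $I_\G(m)$, so $J_\G(m)\le I_\G(m)$ and hence $1/I_\G\le 1/J_\G$. This makes the ``$1/J_\G$ summable $\Rightarrow$ $1/I_\G$ summable'' direction immediate; the content lies in the converse.

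For the converse, fix small $m>0$ and $\om$ admissible for $J_\G(m)$. I split into two cases. If $P_\G^e(\om)^{1/p}|\om|^{1/p'}\ge m/2$, then $\om$ is admissible for $I_\G(m/2)$, so $P_\G^i(\om)\ge I_\G(m/2)$. Otherwise $|\om|\ge m/2$, and I combine the Euclidean isoperimetric inequality $P(\om)\ge c_N|\om|^{(N-1)/N}$ with the elementary bound $P(\om)\le P_\G^i(\om)+P_\G^e(\om)$ (valid because the two quantities together recover $\Hm(J_{1_\om})$ and can only overcount two-sided contributions on $\partial\Om$) to force at least one of $P_\G^i(\om),P_\G^e(\om)$ to be $\ge \tfrac12 c_N(m/2)^{(N-1)/N}$. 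If $P_\G^i(\om)$ is the large one, the desired lower bound is already in hand. If it is $P_\G^e(\om)$, combining with $|\om|\ge m/2$ gives
\[ P_\G^e(\om)^{1/p}|\om|^{1/p'}\ \ge\ c\, m^{\frac{Np-1}{Np}}, \]
and since $(Np-1)/(Np)<1$, for small $m$ this exceeds $m/2$, so once again $\om$ is admissible for $I_\G(m/2)$ and $P_\G^i(\om)\ge I_\G(m/2)$.

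Taking the infimum over all such $\om$ yields, for small $m$,
\[ J_\G(m)\ \ge\ \min\!\left(I_\G(m/2),\ \tfrac12 c_N(m/2)^{(N-1)/N}\right), \]
hence $1/J_\G(m)\le 1/I_\G(m/2)+C\,m^{-(N-1)/N}$. The first summand is integrable near $0$ by hypothesis (via the change of variable $\tau=m/2$), and the second since $(N-1)/N<1$. This proves summability of $1/J_\G$.

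The one delicate step is the sub-case where $|\om|$ is comparatively large but $P_\G^i(\om)$ is small: there one must realise that $P_\G^e(\om)$ has to absorb the full isoperimetric lower bound, and then exploit the sublinear exponent $(Np-1)/(Np)<1$ to re-upgrade this into admissibility for $I_\G$ at level comparable to $m$. Everything else amounts to bookkeeping between the two exponents and the change of variable.
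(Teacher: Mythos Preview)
Your argument is correct. The overall architecture matches the paper's: the direction $1/J_\G\Rightarrow 1/I_\G$ is immediate from $J_\G\le I_\G$, and the converse proceeds by showing a pointwise lower bound of the form
\[
J_\G(m)\ \ge\ \min\!\left(I_\G(\vps m),\ C\,m^{(N-1)/N}\right)
\]
via the Euclidean isoperimetric inequality and the bound $P(\om)\le P_\G^i(\om)+P_\G^e(\om)$.

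The one genuine difference is the choice of dichotomy. The paper splits on the \emph{ratio} $P_\G^e(\om)/|\om|$: it fixes $\eta=\dfrac{a_N}{2|\Om|^{1/N}}$ so that when $P_\G^e(\om)\le\eta|\om|$ the exterior term can be absorbed into the isoperimetric inequality, giving directly $P_\G^i(\om)\gtrsim|\om|^{(N-1)/N}\gtrsim m^{(N-1)/N}$; and when $P_\G^e(\om)>\eta|\om|$ the $|\om|$ term in the constraint is dominated by $P_\G^e(\om)^{1/p}|\om|^{1/p'}$, so $\om$ is automatically admissible for $I_\G(\vps m)$. This yields two clean cases with no further subdivision and no smallness restriction on $m$.

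Your split is on which summand in the constraint exceeds $m/2$, which forces the nested sub-case when $|\om|\ge m/2$ but $P_\G^i(\om)$ is not the large half of the perimeter. Your resolution there---using the sublinear exponent $(Np-1)/(Np)<1$ to show that $P_\G^e(\om)^{1/p}|\om|^{1/p'}\ge c\,m^{(Np-1)/(Np)}>m/2$ for small $m$, hence feeding this sub-case back into admissibility for $I_\G(m/2)$---is a valid and rather elegant manoeuvre, but it is not needed with the paper's ratio-based split. Either way one lands on the same integrable bound $1/J_\G(m)\le 1/I_\G(\vps m)+C m^{-(N-1)/N}$.
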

\begin{proof}
Let $m>0$. Clearly $I_\G(m)\geq J_\Gamma(m)$. We claim that there exist $C,\vps>0$, that depend only on $N$ and $|\Om|$, such that
\begin{equation}
\label{eq:claim1}
J_\Gamma(m)\geq I_\G(\vps m)\wedge Cm^{\frac{N-1}{N}}.
\end{equation}

Then since 
\[\frac{1}{I_\G(m)}\leq \frac{1}{J_\Gamma(m)} \leq \frac{1}{I_\G(\vps m)}+\frac{1}{Cm^{\frac{N-1}{N}}},\]
the conclusion follows.
\par
Let us check claim \eqref{eq:claim1}. Indeed, let $\om\subset \Om$ be a domain such that $|\om|\leq \frac{1}{2}|\Om|$, and $|\om|+|\om|^{ \frac{1}{p'}}P_\G^e(\om)^\frac{1}{p}\geq m$. Let $\eta>0$ that will be fixed later, and suppose that 
\[P_\G^e(\om)\leq \eta|\om|.\]
Then, by the classical isoperimetric inequality, there is a constant $a_N>0$ such that
\[a_N|\om|^{\frac{N-1}{N}}\leq P_\G^i(\om)+P_\G^e(\om)\leq P_\G^i(\om)+\eta |\om|^{\frac{N-1}{N}}|\Om|^\frac{1}{N}.\]
We fix $\eta:=\frac{a_N}{2|\Om|^\frac{1}{N}}$. Then
\[P_\G^i(\om)\geq \frac{a_N}{2}|\om|^\frac{N-1}{N}\geq \frac{a_N}{2(1+\eta^\frac{1}{p})^\frac{N-1}{N}}m^\frac{N-1}{N}.\]
On the other hand, suppose that $P_\G^e(\om)> \eta |\om|$. Then
\[m\leq |\om|^{ \frac{1}{p'} }P_\G^e(\om)^\frac{1}{p}+|\om|\leq (\eta^{-\frac{1}{p}}+1)|\om|^{ \frac{1}{p'} }P_\G^e(\om)^\frac{1}{p},\]
so 
$$
P_\G^i(\om)\geq I_{ \Gamma}\left(\frac{m}{\eta^{-\frac{1}{p}}+1}\right).
$$ 

Claim \eqref{eq:claim1} follows by choosing $\vps:=\frac{1}{\eta^{-\frac{1}{p}}+1}$ and $C:=\frac{a_N}{2(1+\eta^\frac{1}{p})^\frac{N-1}{N}}$.

\end{proof}

\begin{proof}[Proof of Theorem \ref{bgnr05}]  Since $K$ is compact, let $ x_0 \in K$ and $r>0$ be small enough such that 
$$
 r<dist(x_0, \partial\Om \setminus \Gamma) \qquad\text{and}\qquad |B_{x_0,r}\cap\Om|\leq \frac{1}{2}|\Om|.
$$ 
It is enough (by compactness) to prove that $u$ is essentially bounded below on a smaller ball $B_{x_0,r'}\cap\Om$ for some $r'<r$. We proceed by contradiction and suppose without loss of generality that 
$$
\underset{x(\in\Om)\to x_0}{\mathrm{ess{-}liminf}} u(x)=0.
$$
Let $\gamma>0$ that will be fixed small enough at the end of the proof. For any $t\in (0,r\gamma)$ we set
\[
\om_t:=\{x\in \Om:u(x)<t-\gamma|x-x_0|\}
\]
and 
$$
g(t):=\int_{\om_t}|\nabla (u(x)+\gamma|x-x_0|)|\,dx.
$$
Notice that $\om_t\subset B_{x_0,r}$. By our hypothesis, $\om_t$ has positive measure for all $t\in (0,r\gamma)$, and $g(t)=0$ for some $t>0$ implies that $x\mapsto u(x)+\gamma |x-x_0|$ would be locally constant in $\om_t$, which is at most true for one $\gamma$, thus we suppose that $g(t)>0$ for all $t\in (0,r\gamma)$. 
\par
Testing $u$ against $\max(u,t-\gamma|\cdot-x_0|)$ gives
\[\int_{\om_t}|\nabla u|^p\,dx\leq \gamma^p|\om_t |+t^p  P_\G^e(\om_t ),\]
 which can  be simplified further into
\[
\int_{\om_t }|\nabla u|^p\,dx\leq \gamma^p|\om_t |+r^p\gamma^p P_\G^e(\om_t ).
\]
Now,
\[g (t)\leq \gamma |\om_t |+|\om_t |^{ \frac{1}{p'} }\left(\int_{\om_t }|\nabla u|^p\,dx\right)^\frac{1}{p}\leq 2\gamma |\om_t |+r\gamma|\om_t |^{ \frac{1}{p'} }  P_\G^{e}(\om_t )^\frac{1}{p},\]
so in particular $\frac{g(t)}{(2+r)\gamma}\leq |\om_t |+|\om_t |^{ \frac{1}{p'} }P_\G^{e}(\om_t )^\frac{1}{p}$. By monotonicity of $J_\Gamma$,  and since $\om_t$ is at positive distance from $\partial\Om\setminus\Gamma$  we get
\[J_\Gamma\left(\frac{g(t)}{(2+r)\gamma}\right)\leq P_\G^i(\om_t )= g'(t).\]
Summing from $t=0$ to $r\gamma$ we obtain
\[r\gamma\leq\int_{0}^{r\gamma}\frac{ g'(t)dt}{J_\Gamma\left(\frac{g(t)}{(2+r)\gamma}\right)}=(2+r)\gamma \int_{0}^{\frac{g(r\gamma)}{(2+r)\gamma}}\frac{dm}{J_\Gamma(m)}.\]
Using our previous estimate on $g$, we obtain
\[\int_{0}^{|\om_{r\gamma} |+|\om_{r\gamma} |^{ \frac{1}{p'} } P_\G^e(\om_{r\gamma} )^\frac{1}{p}}\frac{dm}{J_\Gamma(m)}\geq \frac{r}{r+2}.\]
In particular this means that $|\om_{r\gamma} |+|\om_{r\gamma} |^{ \frac{1}{p'} }  P_\G^e(\om_{r\gamma} )^\frac{1}{p}$ is bounded below by a constant that does not depend on $\gamma$. However $P_\G^e(\om_{r\gamma} )\leq  2\Hm(\Gamma)<\infty$ and $|\om_{r\gamma} |\leq |B_{x_0,r}\cap \{u< r\gamma\}|\underset{\gamma\to 0}{\longrightarrow}0$, which is a contradiction.
\end{proof}

\section{Analysis of cusps}\label{s.cusps}
We focus in this section on the behaviour of the solution of the Robin problem in a cusp in $\R^N$. 
We consider $h\in\mathcal{C}^1(\R_+,\R_+)$, an increasing function such that $h(0)=0$, $h'(0)=0$, $\sup_{\R_+}|h'|<\infty$ and assume that $h^{N-1}$ is convex.
Let 
$$
\Om:=\{x=(x_1,x')\in\R\times\R^{N-1}\text{ s.t. }\ x_1>0,\ |x'|<h(x_1)\},\\
$$
 and for $t\in (0,1)$ let us set
$$
\Om_t:=\Om\cap \{(x_1,x')\in\Om:x_1\leq t\}.
$$

We introduce the isoperimetric profile of $\Om$ restricted to revolution sets by
\begin{equation}
\label{eq:defIsym}
I_{sym}(m)=\inf\left\{P^i(\om)\,:\,\ \om \in \A \text{  and }  m\le P^e(\om)^{\frac{1}{p}}|\om|^{ \frac{1}{p'}}\right\},
\end{equation}
where
$$
\mathcal{A}:=\left\{A\subset\Om\text{ open, rotationally symmetric  with respect to the $x_1$-axes, and } |A|<+\infty \right\}.
$$
 The following technical result, whose proof will be given at the end of the section, will be essential to recover the summability property related to the isoperimetric profile of $\Om_t$, which is the key ingredient of our approach. Of course, the summability  depends on the behaviour near the origin of the function $h$ which defines the cusp.

\begin{lemma}
\label{lem:Isym}
 We have
\[
\label{eq:ineqL5}
\liminf_{t\to 0}\frac{I_{sym}\left(2P^e(\Om_t)^\frac{1}{p}|\Om_t|^{\frac{1}{p'}}\right)}{P^i(\Om_t)}>0.
\]
\end{lemma}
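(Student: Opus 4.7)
The plan is to take a rotationally symmetric admissible set $A\in\A$ (i.e.\ one with $P^e(A)^{1/p}|A|^{1/p'}\geq 2\phi(t)$, where $\phi(t):=P^e(\Om_t)^{1/p}|\Om_t|^{1/p'}$) and establish $P^i(A)\geq c\,P^i(\Om_t)$ for a constant $c>0$ independent of $A$ and of small $t$; passing to the infimum in $A$ then yields the lemma.

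A first reduction allows me to assume that $A$ has disc slices, i.e.\ $A=\{(x_1,x'):|x'|<r(x_1)\}$, where $r(x_1):=\sup\{|x'|:(x_1,x')\in A\}$ is the outer radius of the slice at level $x_1$. Indeed, replacing $A$ by its outer envelope $A^{\mathrm{out}}:=\{|x'|<r(x_1)\}$ gives $P^e(A^{\mathrm{out}})=P^e(A)$ (both sets touch $\partial\Om$ at the same slices), $|A^{\mathrm{out}}|\geq|A|$ and $P^i(A)\geq P^i(A^{\mathrm{out}})$ (because $A$ carries the same outer cylindrical surface as $A^{\mathrm{out}}$ together with any additional inner boundaries of annular slices); so $A^{\mathrm{out}}$ is still admissible and it suffices to treat disc-slice $A$.

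Assuming disc slices, I split the argument on $x_1^\ast:=\sup\{x_1:r(x_1)=h(x_1)\}$, the last level where $A$ touches $\partial\Om$ (finite since $P^e(A)>0$). In the principal case $x_1^\ast\geq t$ I introduce the BV slice-volume function $g(x_1):=\omega_{N-1}r(x_1)^{N-1}$, which satisfies $g\leq\overline g:=\omega_{N-1}h^{N-1}$ with equality on $\{r=h\}$, equals $\omega_{N-1}h(x_1^\ast)^{N-1}$ at $x_1^\ast$, and tends to $0$ at infinity because $|A|<\infty$. Decomposing the total change
\[ Dg\bigl([x_1^\ast,\infty)\bigr)=-\omega_{N-1}h(x_1^\ast)^{N-1}\]
into a non-negative part carried by $\{r=h\}$ (on which $g=\overline g$ is smooth and increasing) and a remaining part carried by $\{r<h\}$ together with the jumps of $r$, the latter has signed mass at most $-\omega_{N-1}h(x_1^\ast)^{N-1}$ and hence total variation at least $\omega_{N-1}h(x_1^\ast)^{N-1}$. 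Every contribution to this variation yields boundary strictly inside $\Om$ and therefore counts in $P^i(A)$, giving $P^i(A)\geq\omega_{N-1}h(x_1^\ast)^{N-1}\geq\omega_{N-1}h(t)^{N-1}=P^i(\Om_t)$.

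The remaining case $x_1^\ast<t$ is the delicate one: $P^e(A)\leq P^e(\Om_t)$, so admissibility forces $|A|\geq 2^{p'}|\Om_t|$; thus $A':=A\setminus\Om_t$ has $|A'|\geq(2^{p'}-1)|\Om_t|$, is disjoint from $\partial\Om$, and satisfies $P(A')\leq P^i(A)+\Hs(A\cap\{x_1=t\})\leq P^i(A)+P^i(\Om_t)$, so the Euclidean isoperimetric inequality gives
\[P^i(A)\geq c_N(2^{p'}-1)^{(N-1)/N}|\Om_t|^{(N-1)/N}-P^i(\Om_t).\]
I expect the hardest step of the proof to be the cusp-geometric estimate $|\Om_t|^{(N-1)/N}/P^i(\Om_t)\to+\infty$ as $t\to 0$, equivalently $|\Om_t|/h(t)^N\to+\infty$, which is where all the hypotheses on $h$ intervene: convexity of $h^{N-1}$ provides the tangent-at-midpoint lower bound $\int_0^t h^{N-1}\geq t\,h(t/2)^{N-1}$, the condition $h'(0)=0$ yields $t/h(t)\to+\infty$, and controlling the ratio $h(t/2)/h(t)$ via the Lipschitz bound $\sup|h'|<\infty$ combined with the convexity closes the comparison, so that the Case B bound dominates $P^i(\Om_t)$ for small $t$.
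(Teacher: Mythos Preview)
Your overall plan---reduce to competitors with disc slices, then split on whether the last contact abscissa $x_1^*$ with $\partial\Om$ lies beyond $t$---is natural, but the first reduction has a genuine gap. The claim $P^i(A)\ge P^i(A^{\mathrm{out}})$ is false: let $A$ contain a thin annular slab $\{|x_1-c|<L/2,\ r_1<|x'|<r_2\}$ away from $\partial\Om$. Filling in the core removes inner lateral area of order $L\,r_1^{N-2}$ but enlarges the two end caps by $2\omega_{N-1}r_1^{N-1}$, so for $L$ small the outer envelope has \emph{larger} interior perimeter. Your justification (``$A$ carries the same outer cylindrical surface plus additional inner boundaries'') misses exactly this cap discrepancy. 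Admissible competitors of this shape exist (e.g.\ $\Om_s\sqcup D$ with $D$ such a slab), and without the reduction both cases collapse: in Case~A the variation of $\omega_{N-1}r^{N-1}$ can exceed $P^i(A)$ for annular slices (the same slab has variation $2\omega_{N-1}r_2^{N-1}$ but perimeter $\approx 2\omega_{N-1}(r_2^{N-1}-r_1^{N-1})$ for small $L$), and Case~B uses $P^e(A)\le P^e(\Om_t)$, which relies on the disc-slice structure. The paper handles precisely this obstruction by a much heavier route: it regularizes a minimizing sequence so that the free boundaries are Delaunay (constant mean curvature revolution) surfaces, classifies the possible pieces, and performs explicit surgeries to absorb any catenoid/nodoid component crossing the inner cone $\{|x'|<\tfrac14 h(x_1)\}$ into a set $\Om_{t_k}$, arriving at $\tilde A_k=\Om_{t_k}\sqcup D_k$ with $D_k$ bounded away from the axis; a Lipschitz projection onto $\partial\Om$ then yields the isoperimetric control on $D_k$.

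A secondary point on your Case~B endgame: the midpoint-tangent bound $\int_0^t h^{N-1}\ge t\,h(t/2)^{N-1}$ is correct, but closing via a lower bound on $h(t/2)/h(t)$ fails for cusps like $h(t)=e^{-1/t}$ (which satisfies all hypotheses near $0$): there $h(t/2)/h(t)=e^{-1/t}\to 0$ and, for $N\ge 3$, $t\,h(t/2)^{N-1}/h(t)^N=t\,e^{-(N-2)/t}\to 0$. The conclusion $|\Om_t|/h(t)^N\to\infty$ is nevertheless true; it follows from the sharper convexity inequality
\[
\int_0^t h^{N-1}\,dx_1\ \ge\ \frac{h(t)^{2N-2}}{2\,(h^{N-1})'(t)}\ =\ \frac{h(t)^N}{(2N-2)\,h'(t)}
\]
(obtained by writing $\int_0^t f=\int_0^{f(t)}u/f'(f^{-1}(u))\,du\ge f(t)^2/(2f'(t))$ for the convex increasing $f=h^{N-1}$), combined with $h'(t)\to h'(0)=0$.
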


The main result of the section  is the following estimate near the cusp of $\Om$. We fix $t=1$.

\begin{theorem}
\label{thm:t-cusp}
Let   $f \in L^{p'}  (\Om_1)$, $f \ge 0$, $f  \not\equiv  0$ and let $u\in W^{1,p}(\Om_1)$ be a  minimizer in $W^{1,p}(\Om_1)$ of
\[E(v)=\frac{1}{p}\left(\int_{\Om_1}|\nabla v|^pdx+\beta \int_{\partial^e\Om_1}v^p d \Hm \right)-\int_{\Om_1}fvdx,
\]
under the constraint $v= 1 $ on $\partial^i\Om_1$. Then
\[
\left(\int_{0}^{1}\left(\frac{\int_0^t h^{N-2}\, dx_1 }{\int_0^t h^{N-1}\, dx_1 }\right)^{\frac{1}{p}}dt<\infty\right)\Rightarrow\left(\inf_{\Om_{1/2}}u>0\right).
\]
\end{theorem}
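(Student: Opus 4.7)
Mimicking Theorem \ref{bgnr01}, the strategy is to control the sub-level sets $\om_t := \{u < t\}$ via an ODE-type inequality involving the restricted isoperimetric profile $I_{sym}$, and to verify that the summability of $1/I_{sym}$ reduces, through Lemma \ref{lem:Isym}, to the hypothesis of the theorem. First, testing against $\max(u,0)$ shows $u \geq 0$ a.e.; the Dirichlet condition $u\equiv 1$ on $\partial^i\Om_1$ guarantees $\om_t\cap\partial^i\Om_1=\emptyset$ for every $t<1$, so that $\max(u,t)$ is an admissible competitor (it still equals $1$ on $\partial^i\Om_1$). Plugging it into the minimality of $u$ and dropping the nonpositive term $-p\int_{\om_t}f(t-u)\,dx$ coming from the source yields
\[
\int_{\om_t}|\nabla u|^p\,dx \;\leq\; \beta t^p\,P^e(\om_t).
\]
Setting $g(t):=\int_{\om_t}|\nabla u|\,dx$, H\"older gives $g(t)\leq \beta^{1/p} t\,P^e(\om_t)^{1/p}|\om_t|^{1/p'}$, while the coarea formula gives $g'(t) = P^i(\om_t)$.

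The central step is to replace the general sub-level set $\om_t$ by a rotationally symmetric competitor so that $I_{sym}$ can be applied. I plan to use a cross-sectional \emph{annular symmetrization}: in each slice $D_s:=\Om\cap\{x_1=s\}$, replace $\om_t\cap D_s$ by the annulus $\{r(s)\leq|x'|\leq h(s)\}$ of the same $(N-1)$-measure. The resulting set $\om_t^\sharp\in\mathcal{A}$ satisfies $|\om_t^\sharp|=|\om_t|$ and, trivially, $P^e(\om_t^\sharp)\geq P^e(\om_t)$, since $\mathcal{H}^{N-2}(\om_t\cap\partial D_s)\leq c_N h(s)^{N-2}=\mathcal{H}^{N-2}(\om_t^\sharp\cap\partial D_s)$ on its support. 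The delicate point is to show that this rearrangement does not inflate the interior perimeter, i.e.\ $P^i(\om_t^\sharp)\leq C\,P^i(\om_t)$ for a geometric constant $C$; this is where the convexity assumption on $h^{N-1}$ enters crucially. Once granted, the definition of $I_{sym}$ yields
\[
P^i(\om_t)\;\geq\;C^{-1}I_{sym}\!\left(P^e(\om_t)^{1/p}|\om_t|^{1/p'}\right)\;\geq\;C^{-1}I_{sym}\!\left(g(t)/(\beta^{1/p}t)\right).
\]

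The differential inequality $g'(t)\geq C^{-1}I_{sym}(g(t)/(\beta^{1/p}t))$ is then integrated from $t_0:=\operatorname{ess\,inf}_{\Om_1}u$ to a threshold $T\in(0,1)$; the change of variable $m=g(\tau)/(\beta^{1/p}\tau)$ yields, exactly as in Theorem \ref{bgnr01},
\[
T-t_0\;\leq\;C\beta^{1/p}T\int_0^{g(T)/(\beta^{1/p}T)}\frac{dm}{I_{sym}(m)}.
\]
Lemma \ref{lem:Isym} controls the right-hand side: parameterising $m=2P^e(\Om_t)^{1/p}|\Om_t|^{1/p'}$, it gives $I_{sym}(m)\geq c\,h(t)^{N-1}$ for $t$ small. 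Using $|\Om_t|=c_{N-1}\int_0^t h^{N-1}\,ds$ and $P^e(\Om_t)\leq C\int_0^t h^{N-2}\,ds$, together with the elementary estimate $h(t)\int_0^t h^{N-2}\,ds\geq\int_0^t h^{N-1}\,ds$ (a consequence of $h$ being increasing, needed to absorb the two contributions in $dm/dt$), a direct computation shows
\[
\int_0^{m_0}\frac{dm}{I_{sym}(m)}\;\leq\;C'\int_0^{t(m_0)}\left(\frac{\int_0^t h^{N-2}\,ds}{\int_0^t h^{N-1}\,ds}\right)^{1/p}dt,
\]
which is finite by hypothesis. Choosing $T>0$ small enough that $|\om_T|\leq|\Om_1|/2$ and that the above integral is at most $T/2$ then forces $t_0\geq T/2>0$, proving $\inf_{\Om_1}u>0$ and hence $\inf_{\Om_{1/2}}u>0$.

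The main obstacle I anticipate is the annular-symmetrization step: controlling $P^i(\om_t^\sharp)$ by $P^i(\om_t)$ is genuinely delicate, because, unlike Steiner symmetrization with respect to a hyperplane, pushing mass to the boundary of each cross-section is not a classical perimeter-decreasing rearrangement. The convexity of $h^{N-1}$ must enter essentially here, in the same way as it does in the proof of Lemma \ref{lem:Isym}.
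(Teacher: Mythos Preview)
Your overall architecture---derive the energy inequality $\int_{\om_t}|\nabla u|^p\le \beta t^p P^e(\om_t)$, feed it into an ODE for $g(t)$, and reduce summability of $1/I_{sym}$ to the hypothesis via Lemma~\ref{lem:Isym}---matches the paper. But the step you yourself flag as ``the main obstacle'' is a genuine gap, and the paper does \emph{not} attempt to fill it. Your annular symmetrization $\om_t\mapsto \om_t^\sharp$ is not a classical perimeter-decreasing rearrangement: pushing mass radially outward in each slice can create large inner boundary, and there is no reason the convexity of $h^{N-1}$ alone should yield $P^i(\om_t^\sharp)\le C\,P^i(\om_t)$ for arbitrary sublevel sets of $u$. (In Lemma~\ref{lem:Isym} convexity is used in a much more restricted way, only on competitors that are already rotationally symmetric.) Without this inequality you cannot invoke $I_{sym}$, and your differential inequality collapses.

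The paper sidesteps the symmetrization issue entirely by a two-stage comparison. First, it applies the \emph{local} result (Theorem~\ref{bgnr05}) on the Lipschitz piece $\Om\cap\{1/4\le x_1\le 3/4\}$ with $\Gamma$ the lateral boundary there; the relevant profile $I_\Gamma$ is summable for elementary reasons, so $u\ge c>0$ on $\{x_1=1/2\}$. Second, on $\Om_{1/2}$ one compares $u$ with $cv$, where $v$ is the $p$-harmonic function with $v=1$ on $\{x_1=1/2\}$ and Robin condition on the cusp boundary. The crucial observation is that $v$, being the \emph{unique} minimizer of a rotationally invariant energy on a rotationally invariant domain, is itself rotationally symmetric. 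Hence its sublevel sets $\{v<t\}$ lie in $\mathcal A$ automatically, and the ODE argument with $I_{sym}$ runs directly on $v$---no rearrangement needed. The positivity of $v$ then transfers to $u$ by the comparison $u\ge cv$.

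So the missing idea is: do not try to symmetrize the sublevel sets of $u$; instead, reduce to an auxiliary \emph{symmetric} solution by first securing positivity away from the cusp via the local theorem, and then using the comparison principle.
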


\begin{proof}
Let $I_{sym}$ be the restricted isoperimetric profile associated to $h$ as defined  in \eqref{eq:defIsym}.  Note that for $t\in (0,1)$ we have
\begin{align*}
|\Om_t|&=\alpha_{N-1}\int_{0}^{t}h^{N-1}\, dx_1 ,\\
P^e(\Om_t)&= N\alpha_{N-1}\int_{0}^{t}h^{N-2}\sqrt{1+h'^2}\, dx_1 ,\\
P^i(\Om_t)&=\alpha_{N-1}  h^{N-1}(t).
\end{align*}
So according to the estimate on $I_{sym}$  given by Lemma \ref{lem:Isym} , and since $|h'|$ is bounded by a constant, we know that there exist two constants $a,b>0$ such that for any $t\in (0,1)$,
\begin{equation}
\label{eq:est-Isym}
I_{sym}\left(a\left(\int_0^th^{N-1} dx_1 \right)^{ \frac{1}{p'} }\left(\int_{0}^t h^{N-2} dx_1 \right)^{\frac{1}{p}}\right)\geq b h(t)^{N-1}.
\end{equation}
We set
$$
m_0:=a\left(\int_0^{1}h^{N-1} dx_1 \right)^{ \frac{1}{p'}}\left(\int_{0}^{1} h^{N-2} dx_1 \right)^{\frac{1}{p}}.
$$
By change of variable,  an easy computation  which relays on \eqref{eq:est-Isym} and the fact that $h$ is   increasing, shows that 
\begin{align}
\label{eq:intsym}
\int_{0}^{m_0}\frac{dm}{I_{sym}(m)}&=\int_{0}^{1}\frac{\frac{d}{dt}\left(a\left(\int_0^th^{N-1}\, dx_1 \right)^{ \frac{1}{p'}}\left(\int_{0}^t h^{N-2}\, dx_1 \right)^{\frac{1}{p}}\right)}{I_{sym}\left(a\left(\int_0^th^{N-1}\, dx_1 \right)^{ \frac{1}{p'}}\left(\int_{0}^t h^{N-2}\, dx_1 \right)^{\frac{1}{p}}\right)}dt\\
\nonumber &\leq \frac{a}{b}\int_{0}^{1}\left(\frac{\int_0^t h^{N-2}\, dx_1 }{\int_0^t h^{N-1}\, dx_1 }\right)^\frac{1}{p}dt<\infty.
\end{align}

Let $\Gamma:=\partial\Om \cap \{1/4\leq x_1\leq 3/4\}$. We apply Theorem \ref{bgnr05}  to $\Om\cap \{1/4\leq x_1\leq 3/4\}$ and $\Gamma$ and get that $\inf_{\Om\cap \{1/3\leq x_1\leq 2/3\}}(u)>0$. In particular, 
\[\inf_{\Om\cap \{x_1=1/2\}}(u)=:c>0.\]
 Let $v$ be  the $p$-harmonic function on $\Om_{1/2}$ that verifies $ v =1$ on $\Om\cap\{x_1=1/2\}$ and a Robin boundary condition on $\partial\Om\cap\{x_1\leq 1/2\}$. By comparison principle,
\[u\geq c  v\text{ on }\Om_{1/2}.\]
Now, as $ v$ is the unique minimizer of $ w\mapsto \int_{\Om_{1/2}}|\nabla  w|^p\,dx+\beta\int_{\partial^e\Om_{1/2}}| w|^p\,d\Hm$ among functions that verify the constraint $ w=1$ on  $\partial^i \Om_1$, then it is rotationally symmetric; in particular its sublevel sets $\{ v<t\}$ belong to  $\mathcal{A}$ for $t\in (0,1)$. 
\par
Suppose then  $\inf v=0$, value which is reached approaching the cusp. For any $t\in ]0,1[$ let $\om_t:=\{ v<t\}$, and $f(t):=\int_{\om_t}|\nabla  v|\,dx$. With the same computations  as in the proof of Theorem \ref{bgnr01}, we find that
\[f(t)\leq  \beta^{1/p}  t P^e(\om_t)^{\frac{1}{p}} |\om_t|^{ \frac{1}{p'}}.\]
so that 
$$
I_{sym}(f(t)/ \beta^{1/p}  t)\leq f'(t).
$$ 
 The contradiction follows by integration, taking into account the summability property \eqref{eq:intsym}.
\end{proof}

\begin{remark}
{\rm
Theorem \ref{thm:t-cusp} particularly applies to  $h(t)=t^{\alpha}$ provided that $1\le \alpha<p$.  In fact,  $\alpha \in [1,p)$ is also a necessary condition to get the bound from below. 
\par
 Let indeed $u$ be  the $p$-harmonic function on $\Om_1$ equal to $1$ on $\partial^i\Om_1$ and with Robin boundary conditions on $\partial^e\Om_1$.  We claim that  $u$ extends continuously to $\overline{\Om_1}$.  Indeed  the continuity on the boundary $ \partial\Om_1 \setminus \{(0,0)\}$ is a consequence of boundary elliptic regularity  (see for example \cite[Theorem 4.4]{Nittka}). Let now  for $0<t\le 1$
\[\delta_t^+=\sup_{x\in\Om_1:x_1=t}(u)\qquad\text{and}\qquad \delta_t^-=\inf_{x\in\Om_1:x_1\leq t}(u).\]
 Notice that  $\delta_t^+$ is nondecreasing:  indeed, if $\delta_{t_1}^+>\delta_{t_2}^+$ for some $t_1<t_2$, we can consider the admissible function
$$
v(x):=
\begin{cases}
u(x)&\text{if }x_1\ge t_2\\
u(x) \wedge \delta_{t_2}^+ &\text{if }x_1<t_2
\end{cases}
$$
for which $E(v)<E(u)$, a contradiction. Let us denote with $\delta_0^\pm$ the limits of $\delta_t^\pm$ as $t\to 0$. To prove the continuity up to $(0,0)$, it suffices to show  $\delta_0^+=\delta_0^-$. Let $(t_i)_i$ be a sequence converging to $0$ such that 
\begin{equation}
\label{eq:choice}
\inf_{x\in\Om_1:x_1=t_i}u=:\delta_{t_i}\to \delta_0^-.
\end{equation}
Let 
$$
\tilde{\Om}_i:=\frac{\Om-(t_i,0)}{h(t_i)}\cap\{y:|y_1|\leq 1\},
$$
 which is a domain  that is $\mathcal{C}^1$-close to the cylinder $[-1,1]\times B_1^{N-1}$ where $B_1^{N-1}$ is the unit ball of $\R^{N-1}$. 
Let 
$$
u_i(x):=u(t_i+ h(t_i)  x_1,  h(t_i) x')
$$ 
be defined on $\tilde{\Om}_i$. Since the sequence $u_i$ is uniformly bounded and $\partial_\nu u_i\in [0,h(t_i)\beta]$,  then by boundary elliptic regularity  (see for instance the proof of \cite[Theorem 4.4]{Nittka}, thanks to which $u_i$ can be extended across the boundary satisfying an extension of our PDE) one can use Harnack inequality up to the boundary and infer that there exists a constant $C>0$ such that 
\[
\sup(u_i-\delta_{t_i+h(t_i)}^-)\leq C\inf (u_i-\delta_{t_i+h(t_i)}^-)\qquad \text{ on }\tilde{\Om}_i\cap\{x\in \R^N:|x_1|\leq 1/2\}.
\]
In view of \eqref{eq:choice}, this yields

\[
 \delta^+_{t_i}  \leq \delta_{t_i+h(t_i)}^-+C( \delta_{t_i} -\delta_{t_i+h(t_i)}^-)\underset{i\to\infty}{\longrightarrow}\delta_0^-,
\]
from which we get $\delta_t^+\to \delta_0^-$ as $t\to 0^+$, leading to the desired equality $\delta_0^+=\delta_0^-$.
\par
Suppose now that  $u$ satisfies a bound from below, that is  $(\delta_0^-\geq)\inf_{\Om_1}u>0$. Then for any $t>0$, and $\eps\in (0,1)$ to be fixed small enough later, consider the competitor
\[u_t(x)=\begin{cases}
u(x)&\text{ if }t\leq x_1\leq 1\\
\min\left((1-\eps)\delta_0^-+\left(\frac{2 x_1}{t}-1\right)\left(\delta_t^+-(1-\eps)\delta_0^-\right),u\right)&\text{ if }\frac{t}{2}\leq x_1\leq t\\
(1-\eps)\delta_0^-&\text{ if }0\leq x_1\leq \frac{t}{2}.
\end{cases}\]
 Notice that  for a small enough $t$ (depending on $\eps$) we have  $u_t \le u$ (as $u\ge \left( 1-\frac{\e}{2}\right) \delta_0^-$ for $x_1\le t$). The energy comparison gives
\[
\beta\int_{\partial^e\Om_t}\left[u^p-u_t^p\right]\,  d\Hm\leq \int_{\Om_t}\left[|\nabla u_t|^p-|\nabla u|^p\right]\,  dx
\]
which is simplified into
\[c_p\beta\eps  (\delta_0^-) ^p P^e(\Om_{t/2})\leq \frac{2^p}{t^p}\left|(\delta_t^+-\delta_0^-)+\eps \delta_0^-\right|^p|\Om_t|.     \]
Taking into account the expression of $h(t)=t^\alpha$, this gives
\[c_{p,\alpha}\beta^{\frac{1}{p}}t^{1-\frac{\alpha}{p}}\leq \frac{\delta_t^+-\delta_0^-}{\delta_0^-\eps^\frac{1}{p}}+\eps^{1-\frac{1}{p}}.\]
 Letting $t\to 0^+$, and since $\e$ is arbitrary, we conclude that $\alpha<p$.
}
\end{remark}

\begin{remark}
For the Laplace operator, in \cite[Examples 3.4 and 4.13]{BBC08} the authors discuss the cusps corresponding to $ h(t)=t^\alpha $. Their analysis is based on estimates of Green function and relies on a uniform boundary Harnack inequality. Although general functions $h$ are not considered \cite{BBC08}, it seems that for functions $h\in\mathcal{C}^0([0,1],\R_+)$ with $h(0)=0$, $h(t/2)>ch(t)$ for any $t>0$ and some $c>0$, the decomposition of the cusp $\Om$ given by into blocs $D_n=\{x\in\Om:t_{n+1}\leq x_1\leq t_n\}$ where the sequence $t_n$ is chosen such that each $D_n$ is close to a square, links their criteria to the summability of $\frac{\int_0^t h^{N-2}\,dx_1}{h(t)^{N-1}}$. This is not equivalent to our criteria of summability of $\sqrt{\frac{\int_{0}^{t} h^{N-2}\,dx_1}{\int_0^t h^{N-1}\,dx_1}}$, as may be seen from the example $h(t)=t^2\log(1/t)^{\alpha}$ for $\alpha\in (1,2]$. Presumably, the reason our criteria is weaker lies in our method that gives sharp inequalities as long as the sets $\{u<t\}$ are not too far from a minimizer of the relative perimeter, which is not what happens in this kind of cusps.
\end{remark}

We conclude the section with the proof of the technical Lemma \ref{lem:Isym}.

\begin{proof}[Proof of Lemma \ref{lem:Isym}]
We rely on the classification of constant mean curvature revolution surfaces in $\Rn$, meaning connected surfaces of $\Rn$ with constant mean curvature which are invariant for any isometry that fixes $e_1$. These surfaces are given by the revolution of a curve $(x(t),y(t))$ in $\R^2$ which, after parametrization by unit length, is defined by the equation
\[\begin{cases}x'=\cos(\sigma),\ y'=\sin(\sigma)\\
\sigma'=-NH+(N-1)y^{-1}\cos(\sigma).\end{cases}\]
Above,  $\sigma$ is the angle of the tangent vector and $H$ is the mean curvature. Moreover the quantity $T:=y^{N-1}\cos(\sigma)-H y^N$ is constant and the signs of $(H,T)$ fully classify the type of surface (see \cite[Proposition 2.4]{Ro03}). 
\par
 The main idea of the proof is the following : we consider a minimizing sequence $(A_k)$, and we change it into another sequence $(\tilde{A}_k)$ that is quasi-minimizing (it verifies the same constraint and $P^i(\tilde{A}_k)/P^i(A_k)$ is bounded) and that decomposes into the union of a set of the form $\Om_t$ and a set that is far from the axis of revolution, on which the analysis is simpler. 
\par
 Below we will  write $a\lesssim b$ when $a\leq Cb$ for a positive constant $C$ that may depend on $N$ and $h$ but not on the other quantities.
\par
 Given $t>0$, let us set
$$
m_t:=2P^e(\Om_t)^\frac{1}{p}|\Om_t|^{\frac{1}{p'}}.
$$

Consider $(A_k)_k$ a minimizing sequence for $I_{sym}( m_t)$. We lose no generality in replacing each $A_k$ with 
the minimizer of
\[\inf\left\{P^i(A),\ A\in\A\text{ s.t. }A\Delta A_k\subset \{x\in\Om:\text{dist}(x,\partial\Om)\geq \frac{1}{k},x_1\leq k\},\  |A|\geq |A_k|\right\}.\]
This problem is well posed and has a solution, by classical compactness arguments. Due to the classification of constant mean curvature revolution surfaces, we know that in the set $\{x\in\Om:\text{dist}(x,\partial\Om)> \frac{1}{k},x_1< k\}$, $A_k$ is smooth and $\partial A_k$ is a union of nonnegative constant mean curvature surfaces (when the normal vector is oriented outward). We denote $\partial ^e A_k: = \partial^* A_k \cap \partial \Om$ and  $\partial ^i A_k= \partial^* A_k \cap \Om$. The perimeter of $(A_k)_k$ is locally bounded so there is a subsequence of $(A_k)_k$ (that we denote with the same index) and a set $A\in\A$ such that $A_k\to A$ in $L^1_{\text{loc}}(\Om)$, with also a local Hausdorff convergence in $\Om$ due to the interior minimality of the sets $(A_k)$.
\begin{itemize}[label=\textbullet]
\item  Note that  $|A_k|$ and $P^e(A_k)$ are bounded from above and below. To prove that they are bounded from above we consider the projection $\Phi:(x_1,x')\in\Om\mapsto (h^{-1}(|x'|),x')$. It may be checked that 
$$\sup_{|x'|\geq h(1)}\Vert D\Phi\Vert<\infty \mbox{ and } \Hs\left(\partial^e A_k\setminus \Phi(\partial^iA_k) \right)=0$$
 so
\[
P^e(A_k)\leq P^e(\Om_1)+\Hs(\partial^{e}A_k\cap\{|x_1|\geq 1\})\lesssim P^e(\Om_1)+I_{sym}( m_t ).
\]
Similarly, 
$$
|A_k|^{1-\frac{1}{N}}\lesssim \Per(A_k)\lesssim P^e(\Om_1)+I_{sym}( m_t),
$$
by the classical isoperimetric inequality. In particular we know that $|A|<\infty$.
\item  We have  $I_{sym}( m_t)>0$. Suppose indeed that $I_{sym}( m_t)=0$. Then for any $\tau>0$ we get with the same reasonning on $\Om\cap\{x_1\geq \tau\}$ that $$P^e(A_k\cap\{x_1>\tau\})\leq \left(\sup_{|x'|\geq h(\tau)}\Vert D\Phi\Vert\right) P^i(A_k)\underset{k\to\infty}{\longrightarrow} 0$$
 for every $\tau$, so that $P^e(A_k)\to 0$, which contradicts the previous point.
\item $\partial A$ is a revolution surface of  constant mean curvature, such that its section 
\[A\cap \{(x_1,x_2,0, \dots, 0),x_1,x_2>0\}\]
is a union of convex sets that meet $\partial\Om$ with an angle less than $\frac{\pi}{2}$ as in  Figures \ref{figure_delaunay} and \ref{figure2}. Indeed, by regularity argument on minimal surfaces we know $\partial A$ is a union of rotationally symmetric surfaces with (nonnegative) constant mean curvature, which are moreover bounded (because $|A|<\infty$).  In view of  the classification of such surfaces,   the components of $A$ may only be the intersection of $\Om$ with
\begin{itemize}
\item[1)] a half-space $\{x_1\leq\lambda\}$ or a ball centered on $\{x'=0\}$ (observe that the boundary angle condition and the interior regularity implies that the ball meets $\partial\Om$).
\item[2)] the exterior of a catenoid or the interior of the loop of a nodoid (see the figure below).
\end{itemize}

\begin{figure}
\begin{center}
\includegraphics[scale=0.1]{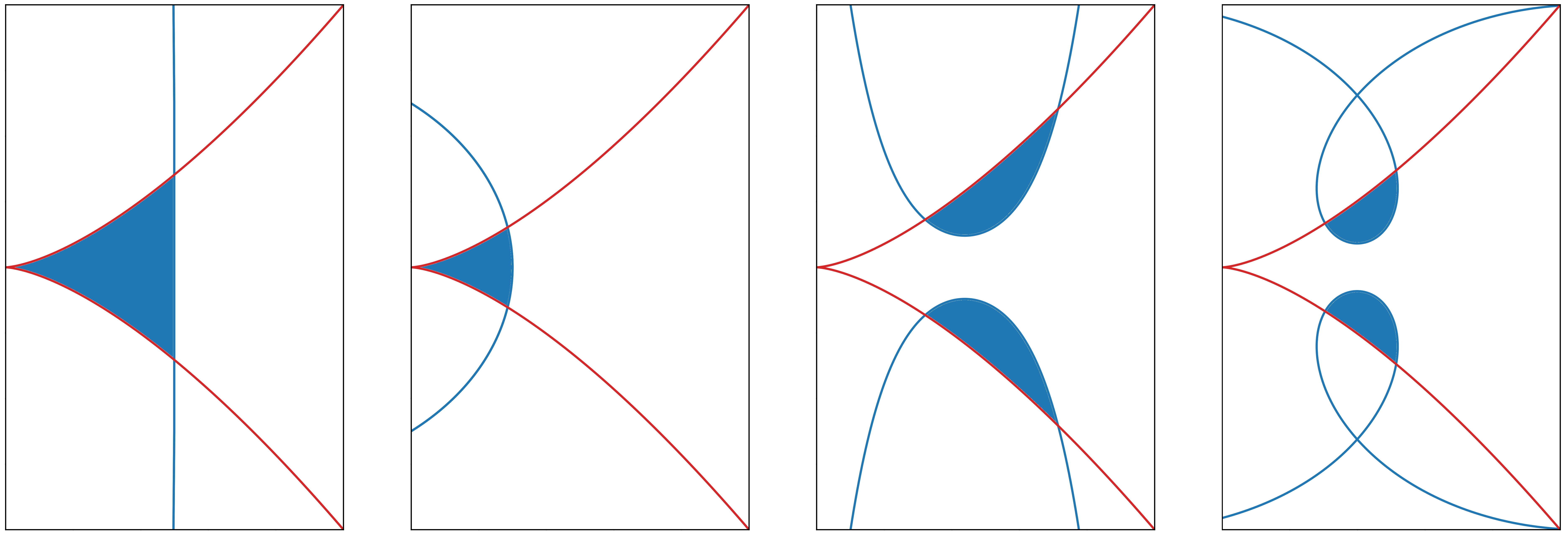}

\end{center}
\caption{\small Possible connected components of $A$, seen in the section $\R_+\times \R\times \{0\}^{N-2}$. From left to right: hyperplane, sphere, catenoid, nodoid.}
\label{figure_delaunay}
\end{figure}

In particular each component of the section is convex. Finally, let us check the angle condition; let $\tilde{A}$ be one of the connected components of $A$, and parametrize $\partial^i \tilde{A}\cap(\R_+^2\times \{0\}^{N-2})$ by a smooth curve $c:[0,T]\to \R_+^2$ that rotates clockwise where $c(0),c(T)\in\partial\Om$. There exists a sequence of curves $c_k:[0,T]\to \R_+^2$ that converges in $\mathcal{C}^1$ to $c_k$ such that $c_k((0,T))\subset\partial^i A_k$. Suppose the angle condition is not verified at $c(0)=(x_1,x_2)$ ($c(T)$ is handled similarly), meaning $c'(0)\cdot (1,h'(x_2))>0$. Then there is a small $\delta\in (0,1)$ such that $c'(t)\cdot (1,h'(x_2))>2\delta$ for $t\in (0,\delta)$, which implies that for any large enough $k$,
\[c_k'(t)\cdot (1,h'(x_2))>\delta,\ \forall t\in [0,\delta].\]
Let $p_k,q_k$ be the orthogonal projections of $c_k(0)$, $c_k(\delta)$ on the graph of $h$ (and similarly $p,q$ the projection of $c(0),c(\delta)$; notice $p=c(0)$) and let $T_k$ be the trapezoid formed by the graph of $h$ between $p_k$ and $q_k$, $[q_k,c_k(\delta)]$, $c_{k}([0,\delta])$, $[c_k(0),p_k]$. Let $T_k^{rev}\subset\R^N$ be the revolution of $T_k$ around the axis. We claim that the sequence $(A_k\cup T_k^{rev})_k$ still verifies the constraint (this is direct because the measure and exterior perimeter can only increase) while
\[\liminf_{k\to\infty} P^i(A_k\cup T_k^{rev})<\liminf_{k\to\infty}P^i(A_k)\]
which is a contradiction because $(A_k)$ is already a minimizing sequence. This last estimate is obtained from
\begin{align*}
P^i(A_k\cup T_k^{rev})-P^i(A_k)&\leq\Hs([p_k,c_k(0)]^{rev})+\Hs([q_k,c_k(\delta)]^{rev})-\Hs(c_{k}([0,\delta])^{rev})\\
&\underset{k\to\infty}{\longrightarrow}\Hs([q,c(\delta)]^{rev})-\Hs(c([0,\delta])^{rev})\\
&<0\text{ when }\delta\text{ is small enough.}
\end{align*}

\begin{figure}
\begin{center}
\includegraphics[scale=0.4]{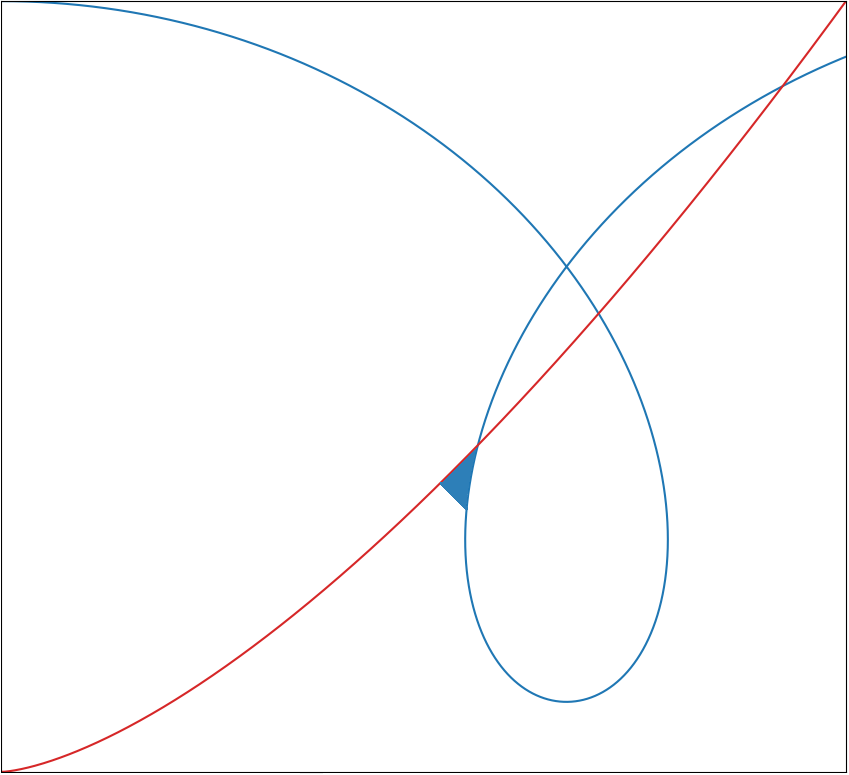}\\
\end{center}
\caption{The shaded region is added to $A_k$.}
\label{figure2}
\end{figure}
\end{itemize}

Let 
$$\Om'=\{(x_1,x')\in\Om:|x'|\leq \frac{1}{2}h(x_1)\}, \quad\Om''=\{(x_1,x')\in\Om:|x'|\leq \frac{1}{4}h(x_1)\}.$$
 We will now make a few modifications on the sequence $(A_k)_k$, to obtain a new sequence $\tilde{A}_k$ verifying the same constraints such that
\[
\tilde{A}_k=\Om_{t_k}\sqcup D_k,
\]
where $t_k>0$, $D_k\subset\Om\setminus\Om''$, and $P^i(\tilde{A}_k)\lesssim P^i(A_k)$.

 \begin{itemize}[label=\textbullet]
\item First modification: let $\delta_t>0$ be such that $P^i(\Om_{\delta_t})=I_{sym}(m_t)$ (note that we do not know a priori  how to compare $\delta_t$ and $t$). change $A_k$ into $A_k':=A_k\cup \Om_{ \delta_t }$. Notice that $A_k'$ still verifies the constraint and by choice of $\delta_t$, 
$$P^i(A_k')\leq 2I_{sym}( m_t )+o_{k\to\infty}(1).$$
\item Second modification: suppose there exists a point $( \tau ,0)\in  \partial A$ for some $ \tau>\delta_t $,  with $\tau$ chosen maximal. This means that for any large enough $k$, $\partial A_k$ contains 
$$
 S_k \cap \left\{x\in \Om\,:\, x_1\le k \text{ and } dist(x,\partial \Om)\ge \frac{1}{k}\right\},
$$
 where $S_k$ is either a hyperplane or a sphere going through $(\tau_k,0)$, where $\tau_k \to \tau$ (the hyperplane is orthogonal to the $x_1$ axes, while the center of the sphere is on the $x_1$-axes, and has an abscissa less that $\tau_k$). In this case we let $A_k'':=A_k'\cup\Om_{ \tau_k}$.  Again, we may assume $\tau_k$ to be chosen maximal.
\par
Consider the projection 
$$(x_1,x')\in  S_k\mapsto \left( \tau_k ,\frac{h( \tau_k )}{h(x_1)} x'\right)\in \{x_1= \tau_k \}.$$ 
Its differential is locally bounded on  $S_k$  because for any such $(x_1,x')\in  S_k$ one has $h(x_1)\geq \frac{1}{1+\Vert h'\Vert_{L^\infty}^2}h( \tau_k )$. Indeed the minimal value on $x_1$ here is given by the intersection of $S_k$ with $\partial\Om$ ; the boundary angle condition gives that $(\tau_k -x_1,-h(x_1))\cdot (1,h'(x_1))\leq 0$, so $h(  \tau_k )\leq h(x_1+h(x_1)h'(x_1))\leq \left(1+\Vert h'\Vert_{L^\infty}^{ 2 }\right)h(x_1)$. As a consequence,
\[P^i(A_k'')\lesssim I_{sym}( m_t )+o_{k\to\infty}(1).\]
\item Third modification: suppose $A\cap\{x_1> \tau\}\cap\Om''$ is not empty,  where $\tau$ is the same as in the previous point. Then $\partial A$ contains a part of catenoid or nodoid $C$ that  passes through $\partial\Om'$. We now make a disjunction of two cases.
\begin{itemize}
\item[Case 1.] The rightmost point of $C$ is in $\Om\setminus \Om''$. Then, for a large enough $k$ we know $\partial A_k$ contains a part of a catenoid or nodoid $C_k$ that approaches $C$ in $\mathcal{C}^1$ such that its rightmost point is in $\Om\setminus\Om''$, and it meets $\partial \Om''$ at some $(T_k,\frac{1}{4}h(T_k))$. Then we let
\[\tilde{A}_k=A_k''\cup \Om_{T_k}.\]
Again $\tilde{A}_k$ verifies the constraint and with the same horizontal projection argument as in the previous point
$$
\Hm(\partial^i \Om_{T_k} )\lesssim \Hm(\partial^i \Om_{T_k} \cap (\Om\setminus \Om''))\lesssim P^i(A_k)$$
so $P^i(\tilde{A_k})\lesssim P^i(A_k)$, where the second inequality is obtained by projection of $C_k$ on the annulus $\left\{(T_k,x'):\frac{1}{4}h(T_k)\leq |x'|\leq h(T_k)\right\}$ through the application $\psi:(x_1,x')\in C_k\mapsto \left( T_k ,\frac{h(T_k)}{h(x_1)} x'\right)\in \{x:x_1=T_k\}$.
 \item[Case 2.] Or the rightmost point of $C$ is reached in $\Om''$, so the rightmost point of $C_k$ is reached in $\Om'$. We then denote $T_k$ its abscissa and we let
\[\tilde{A}_k=A_k''\cup \Om_{T_k}\]
as previously. With the same projection, $P^i(\tilde{A_k})\lesssim P^i(A_k)$.
\end{itemize}

\end{itemize}

After having performed the previous modifications, we went from $A_k$ to $\tilde{A}_k$ such that
\[
\tilde{A}_k=\Om_{t_k}\sqcup D_k,
\]
where $t_k\geq\delta_t$, 
\[
D_k\subset\Om\setminus\Om'',
\] 
and
$$
P^i(\tilde{A}_k)\lesssim I_{sym}(m_t)+o_{k\to \infty}(1).
$$

Notice that the ``classical'' relative isoperimetric inequality applies to $D_k$. If we consider indeed the projection $\pi:\Om\setminus\Om''\mapsto \partial\Om$ such that
\[
\pi(x_1,x'):=\left(x_1,\frac{x'}{|x'|}h(x_1)\right)
\]
then its differential is bounded on $\Om\setminus\Om''$. Moreover $\partial^eD_k \subset \pi(\partial^i D_k)$ up to a $\Hs$-negligible set, from which we get
\[P^e(D_k)+|D_k|^{1-\frac{1}{N}}\lesssim P^i(D_k).\]
As well, for $k$ large enough
\begin{equation}
\label{eq:ineq-fin}
P^i(\Om_{t_k}) \lesssim   P^i(\tilde{A}_k) \qquad\text{and}\qquad P^{i}(\tilde{A}_k) \lesssim   I_{sym}( m_t).
\end{equation}
If $t_k\geq t$  then inequality \eqref{eq:ineqL5} follows. Assume that $t_k<t$. Then
\begin{align*}
2P^e(\Om_t)^{\frac{1}{p}}|\Om_t|^{\frac{1}{p'}}&= m_t\lesssim  P^e(\tilde{A}_k)^{\frac{1}{p}}|\tilde{A}_k|^{ \frac{1}{p'}}\\
&=(P^e(\Om_{t_k})+P^e(D_k))^{\frac{1}{p}}(|\Om_{t_k}|+|D_k|)^{ \frac{1}{p'}}\\
&\leq\left(P^e(\Om_t)+cP^i(D_k)\right)^{\frac{1}{p}}\left(|\Om_t|+cP^i(D_k)^{\frac{N}{N-1}}\right)^{ \frac{1}{p'}}\\
\end{align*}
for some constant $c>0$. We thus infer
$$
P^i(D_k)\gtrsim |\Om_t|^{1-\frac{1}{N}}\wedge P^e(\Om_t).
$$ 
Since  in view of \eqref{eq:ineq-fin}
$$
P^i(D_k) \le P^i(\Om_{t_k})+P^i(\tilde A_k) \lesssim P^i(\tilde A_k) \lesssim I_{sym}( m_t)=: \sigma_t P^{i}(\Om_t)
$$
 for some $\sigma_t>0$, we deduce 
 $$
 \sigma_t \gtrsim \frac{|\Om_t|^{1-\frac{1}{N}}\wedge P^e(\Om_t)}{P^i(\Om_t)}.
 $$
The conclusion follows if we estimate $\sigma_t$ from below. Since $P^i(\Om_t)=o_{t\to 0}\left(P^e(\Om_t)\right)$ it is enough to bound from below $\frac{|\Om_t|^{1-\frac{1}{N}}}{P^i(\Om_t)}$. Using the convexity of $h^{N-1}$ we deduce \[|\Om_t|=\int_{0}^{t}h^{N-1}  (x_1)\,dx_1 \geq \frac{1}{2}\frac{h(t)^{2N-2}}{\frac{d}{dt}h^{N-1} (t) }=\frac{h(t)^N}{(2N-2)h'(t)},\]
so we get $\sigma_t\gtrsim 1\wedge \frac{1}{h'(t)^{1-\frac{1}{N}}}$, which ends the proof.
\end{proof}

\section{Further remarks}
\noindent{\bf More general operators.}
The results of the paper  extend naturally to more general elliptic problems with Robin boundary conditions, which are not of energy type.  
So let $\Om\sq \R^N$ be a bounded, connected, open  set, with rectifiable boundary, such that $\Hm(\partial \Om)<+\infty$. 

Let 
  $${\mathcal A} : \Om \times \R^N \ra \R^N$$
  $${\mathcal B} : \partial \Om \times \R \ra \R$$
  $$\psi : \partial\Om \ra \R$$
  be  three continuous functions such that for some $0< \alpha _1\le \alpha_2$,  and every  $x\in \Om, y \in \partial \Om, z \in \R^N, v \in \R$,   
  $$ \alpha_1 |z|^p\le z{\mathcal A} (x,z) , |{\mathcal A} (x,z) |\le \alpha_2|z|^{p-1},$$
  $$|{\mathcal B} (y,v)| \le \psi(y) |v|^{p-1}.$$
  Assume moreover that  for every  $x\in \Om, y \in \partial \Om, z_1, z_2 \in \R^N, v_1, v_2 \in \R$
  $$(z_1-z_2)( {\mathcal A} (x,z_1)-{\mathcal A} (x,z_2)) \ge 0,$$
  $$(v_1-v_2)({\mathcal B} (y,v_1)-{\mathcal B} (y,v_2)) \ge 0.$$
  We consider the (formal) problem
  \begin{equation}
\label{pden}
\begin{cases}
-div ({\mathcal A} (x, \nabla u)) =f&\hbox{in }\Om\\
\ds {\mathcal A} (x, \nabla u) \cdot {\bf n}+{\mathcal B(x,u)}=0&\hbox{on }\partial\Om.
\end{cases}
\end{equation}
 We do not develop around the question of existence of a (weak) solution in this framework and refer to the paper by R. Nittka \cite{Ni13} for an introduction to general Robin problems in Lipschitz sets, to \cite{HKM93} for the analysis of ${\mathcal A}$-superharmonic functions and to \cite{BG16} for details on the framework of domains with rectifiable boundary. 

Let $f \in L^{p'}(\Om)$, $f\ge 0$, $f\not=0$.  Assume that a weak solution exists in our nonsmooth context, namely that there exists $u \in W^{1,p} (\Om)$ such that $\forall v \in W^{1,p} (\Om)$
$$ \int _\Om   {\mathcal A}(x, \nabla u) \nabla v dx + \int_{\partial \Om}[ {\mathcal B} (x,u^+)v^+ + {\mathcal B} (x,u^-)v^- ]d \Hm = \int_\Om f v dx.$$

The fact that $u \ge 0$ is a consequence of the properties of ${\mathcal A}, {\mathcal B}$ and can be noticed by testing the equation with $u \wedge 0$. 
Taking $u \vee t$ as test function  for $t >0$, and using again the properties of ${\mathcal A}, {\mathcal B}$ and $\psi$ 
one gets directly an inequality similar to \eqref{eq3.1}
\begin{equation}\label{eq3.1b}
\int_{\om_t}|\nabla u|^p \,dx\leq \frac{|\psi|_\infty}{\alpha_1} t^pP^e(\om_t),
\end{equation}
which is the key ingredient of our results. The proofs of Theorems \ref{bgnr01} and \ref{bgnr05} can be continued from this point on.

\medskip
\noindent{\bf More general open sets.}
It could be possible to deal with more general open sets,  removing the rectifiability hypothesis, but some important drawbacks occur. However, this removes the Robin problem from Sobolev spaces, the natural context being the one of  the SBV functions, the very first question being its well posedness. The traces  can occur only on the rectifiable part of the boundary; on the purely non-rectifiable part, the functions that we consider do not have jumps ! In other words, they do not behave as boundary points, even though they belong to the topological boundary.  
\medskip

\noindent
{\bf Acknowledgments.} D.B. and M.N. were supported by  ANR SHAPO (ANR-18-CE40-0013).
\bibliographystyle{mybst}
\bibliography{References}

\end{document}